\renewcommand{\Re}{Re}
\DeclareMathOperator{\Ext}{Ext}
\DeclareMathOperator{\dist}{dist}
\DeclareMathOperator{\Ann}{Ann}
\newcommand{\bbN}{{\mathbb N}}
\newcommand{\bbR}{\mathbb R}
\newcommand{\bbC}{\mathbb C}
\newcommand{\cZ}{{\mathcal Z}}
\newcommand{\bt}{\mathbf t}
\newcommand{\cC}{\mathcal C}
\newcommand{\cB}{\mathcal B}
\newcommand{\e}{\varepsilon}
\newtheorem{thm}{Theorem}
\newtheorem{theorem}{Theorem}[section] 
\newtheorem{corollary}[theorem]{Corollary}
\newtheorem{remark}[theorem]{Remark}
\newtheorem{question}[theorem]{Question}
\newtheorem{lemma}[theorem]{Lemma}
\newtheorem{prop}[theorem]{Proposition}
\theoremstyle{definition}
\newtheorem{definition}[theorem]{Definition}
\newtheorem{problem}[theorem]{Problem}
\DeclareMathOperator{\Fin}{Fin}
\newcounter{my_enumerate_counter}
\newcommand{\pushcounter}{\setcounter{my_enumerate_counter}{\value{enumi}}}
\newcommand{\popcounter}{\setcounter{enumi}{\value{my_enumerate_counter}}}
\newcommand{\cP}{{\mathcal P}}
\newcommand{\cU}{{\mathcal U}}
\DeclareMathOperator{\Ad}{Ad}
\title{Countable saturation of corona algebras}
\author{Ilijas Farah}
\address{Department of Mathematics and Statistics\\
York University\\
4700 Keele Street\\
North York, Ontario\\ Canada, M3J 1P3\\
and Matematicki Institut, Kneza Mihaila 34, Belgrade, Serbia}
\urladdr{http://www.math.yorku.ca/$\sim$ifarah}
\email{ifarah@mathstat.yorku.ca}
\author{Bradd Hart} 
\address{Dept. of Mathematics and Statistics\\
McMaster University\\ 1280 Main Street\\ West Hamilton, Ontario\\
Canada L8S 4K1}
\email{hartb@mcmaster.ca}
\urladdr{http://www.math.mcmaster.ca/$\sim$bradd/}
\thanks{Partially supported by NSERC. The first author would like  to thank Saeed Ghasemi and 
Paul McKenney for pointing out to several typos in the earlier version and to Mikael R\o rdam for a helpful remark. }
\subjclass{46L05, 03C65}
\date{\today}
\begin{document}
\begin{abstract} 
We present  unified proofs of several properties of the corona of 
$\sigma$-unital C*-algebras such as AA-CRISP, SAW*, being sub-$\sigma$-Stonean in the 
sense of Kirchberg, and 
the conclusion of Kasparov's Technical Theorem. Although 
our results
were obtained by considering C*-algebras as models of the logic for metric structures, 
the reader is not required to have any knowledge
of model theory of metric structures (or model theory, or logic in general). 
The proofs involve analysis of the extent of model-theoretic saturation of  corona algebras.  \\

\noindent {\sc R\'esum\'e.}
Nous pr\'esentons des d\'emonstrations unifi\'ees de plusieurs propri\'et\'es de la corona des C*-algebras $\sigma$-unitales tel qu'AA-CRISP, SAW*, \'etant sous-$\sigma$-Stonean dans le sens de Kirchberg, et la conclusion du th\'eor\`eme technique de Kasparov. Bien que nos r\'esultats aient \'et\'e obtenus en consid\'erant les C*-algebras comme mod\`eles de la logique pour les structures m\'etriques, le lecteur n'est pas requis d'avoir aucune connaissance de la th\'eorie des mod\`eles des structures m\'etriques (ou la th\'eorie des mod\`eles, ou de la logique en g\'en\'eral). Les 
d\'emonstrations impliquent l'analyse de l'ampleur de la saturation mod\`ele-th\'eor\'etique des algebres de corona.
\end{abstract} 

\maketitle
We shall investigate the degree of countable saturation of  coronas (see Definition~\ref{D.Deg1} 
and paragraph following~it). This  property is shared by 
ultraproducts associated with nonprincipal ultrafilers on $\bbN$ in its full form. 
The following summarizes our main results. All ultrafilters are nonprincipal ultrafilters on~$\bbN$.  

\begin{thm} \label{T0} Assume a C*-algebra $M$ is in one of the following forms: 
\begin{enumerate}
\item\label{I.T-1.1}  the corona of a $\sigma$-unital C*-algebra, 
\item \label{I.T-1.2} an ultraproduct of a sequence of C*-algebras, 
\item \label{I.T-1.3} an ultrapower of a C*-algebra, 
\item \label{I.T-1.4} $\prod_n A_n/\bigoplus_n A_n$, for unital C*-algebras $A_n$,  
\item \label{I.T-1.5} the relative commutant of a separable subalgebra of an algebra that is in one of the forms
\eqref{I.T-1.1}--\eqref{I.T-1.4}. 
\pushcounter
\end{enumerate}
Then $M$ satisfies each of the following (see below for definitions): 
\begin{enumerate}
\popcounter
\item\label{J.T-1.1}  It is  SAW*
\item\label{J.T-1.2}  It  has AA-CRISP (asymptotically abelian, countable Riesz separation property), 
\item\label{J.T-1.3}  The conclusion of Kasparov's technical theorem, 
\item\label{J.T-1.4}  It is $\sigma$-sub-Stonean in the sense of Kirchberg, 
\item \label{J.T-1.5} Every derivation of a separable subalgebra of $M$ is of the 
form $\delta_b$ for some $b\in M$. 
\end{enumerate}
\end{thm}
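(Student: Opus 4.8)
The plan is to reduce all five consequences to a single underlying saturation principle for the algebra $M$, rather than proving them one at a time. The paper's stated goal is to analyze the degree of countable saturation enjoyed by coronas, ultraproducts, and the other listed algebras, so I expect an earlier result (phrased in terms of Definition~\ref{D.Deg1}) establishing that every algebra of the forms \eqref{I.T-1.1}--\eqref{I.T-1.4} is countably degree-$1$ saturated, and that the relative-commutant construction in \eqref{I.T-1.5} preserves this. I would therefore first invoke that theorem to put every $M$ under consideration on equal footing: each is a C*-algebra in which every countable, finitely satisfiable set of degree-$1$ conditions over a separable parameter set has a realization. With this in hand, the task becomes purely one of expressing each of the target properties \eqref{J.T-1.1}--\eqref{J.T-1.5} as the solvability of such a system.

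The concrete steps I would carry out are as follows. For \eqref{J.T-1.1} (SAW*), given orthogonal $\sigma$-unital subalgebras $A, B$ of $M$ with approximate units $(a_n)$, $(b_n)$, I would write down the type asserting the existence of a positive contraction $e$ with $ea_n = a_n$ and $eb_n = 0$ for all $n$; each finite fragment is satisfiable by a standard quasicentral-approximate-unit argument, so degree-$1$ saturation yields the required element. For \eqref{J.T-1.2} (AA-CRISP) I would encode the Riesz separation data as a countable family of inequalities $c \le x \le d$ together with the asymptotic-abelianness commutation conditions, again checking finite satisfiability and applying saturation. Property \eqref{J.T-1.3} (Kasparov's technical theorem) follows in the same spirit: the separable subalgebras $A_1, A_2$ and the derivation-ideal $\Delta$ give rise to a type for a positive contraction $M$ that is asymptotically central modulo $A_1$, absorbs $A_2$, and kills $\Delta$. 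Property \eqref{J.T-1.4} ($\sigma$-sub-Stonean) is the requirement that for separable $X \perp Y$ one can find positive contractions $e, f$ with $ef = f$ and the appropriate orthogonality/absorption relations, which is once more a countable degree-$1$ system. Finally \eqref{J.T-1.5} (innerness of derivations) requires realizing an element $b$ satisfying $\delta(x) = bx - xb$ for all $x$ in a countable dense subset of the separable subalgebra; finite satisfiability here uses that a derivation of a separable C*-algebra is approximately inner, and saturation upgrades this to an exact realization.

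The main obstacle, and the genuinely substantive part of the argument, is verifying finite satisfiability of each type — that is, producing for every finite fragment an approximate solution inside $M$ with controlled error. This is where the classical C*-algebraic input enters (quasicentral approximate units, the Kasparov technical theorem's finite-dimensional core, approximate innerness of derivations on separable algebras), and the uniformity needed to match Definition~\ref{D.Deg1} must be checked in each case. I would also need to confirm that each property as classically stated is literally equivalent to the solvability of the corresponding type, rather than merely implied by it; the reverse direction is typically routine but should be noted. Assuming the countable degree-$1$ saturation theorem for all forms \eqref{I.T-1.1}--\eqref{I.T-1.5} has been proved earlier, the deductions in this theorem are then short, with essentially all the work concentrated in these finite-satisfiability verifications.
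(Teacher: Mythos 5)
Your proposal matches the paper's proof: the paper first establishes countable degree-$1$ saturation for each of the forms \eqref{I.T-1.1}--\eqref{I.T-1.5} (Theorem~\ref{T1}, \L os's theorem, and Lemma~\ref{L.cta}), and then derives each of \eqref{J.T-1.1}--\eqref{J.T-1.5} by encoding it as a consistent countable degree-$1$ type exactly as you describe (Propositions~\ref{P.SAW}, \ref{P.AA}, \ref{P.KTT}, \ref{P.ssS}, \ref{P.derivation}). Your finite-satisfiability witnesses (quasicentral approximate units, the $a_n$ themselves for AA-CRISP, approximate innerness for derivations) are the same ones the paper uses.
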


\begin{proof} 
Each of these classes of C*-algebras is countably degree-1 saturated (Definition~\ref{D.Deg1}). 
For \eqref{I.T-1.1} this is  Theorem~\ref{T1}, proved in \S\ref{S.Proofs}. 
For \eqref{I.T-1.2} and \eqref{I.T-1.3} this is a consequence of \L os's theorem 
(see e.g., \cite[Proposition~4.11]{FaHaSh:Model2}). 
Every algebra as in \eqref{I.T-1.4} is the corona of $\bigoplus_n A_n$ so this is a special case
of \eqref{I.T-1.1}. 
For~\eqref{I.T-1.5} this is Lemma~\ref{L.cta}. 

Property \eqref{J.T-1.1} now follows by Proposition~\ref{P.SAW}, 
\eqref{J.T-1.2} follows by Proposition~\ref{P.AA},  
 \eqref{J.T-1.3} follows by Proposition~\ref{P.KTT}, 
 \eqref{J.T-1.4} follows by Proposition~\ref{P.ssS}, 
 and \eqref{J.T-1.5} follows by Proposition~\ref{P.derivation}. 
\end{proof} 

The assertion `every approximately inner automorphism of 
a separable subalgebra of $M$ is implemented by a unitary in $M$' 
is true for algebras as in \eqref{I.T-1.2}, \eqref{I.T-1.3} or the corresponding instance of  \eqref{I.T-1.4}
(Lemma~\ref{L.auto.1}). 
However this is not true in case when $M$ is 
the Calkin algebra (see Proposition~\ref{P2}).

By \cite{Gha:SAW*} no SAW*-algebra can be written as a tensor product of two 
infinite-dimensional C*-algebras. By Theorem~\ref{T0}, this applies to every 
  C*-algebra $M$ satisfying any of \eqref{I.T-1.1}--\eqref{I.T-1.5}.

\subsection*{Organization of the paper} In \S\ref{S.Intro} we introduce terminology and 
state the main results. Applications are given in \S\ref{S.App}, and proofs of main results
are in \S\ref{S.Proofs}. In \S\ref{S.Limiting} we demonstrate that degree of saturation of
the Calkin algebra is rather mild. In \S\ref{S.Concluding} we list several open problems.

\section{Introduction} \label{S.Intro}
For $F\subseteq \bbR$ and $\e>0$ we write $F_{\e}=\{x\in \bbR: \dist(x,F)\leq \e\}$. 
Given a C*-algebra $A$, a \emph{degree 1 *-polynomial} in variables $x_j$, for $j\in \bbN$, 
with coefficients in $A$
is a linear combination of terms of the form $ax_j b$, $a x_j^* b$ and $a$ with $a,b$ in $A$. 
We write $M_{\leq 1}$ for the unit ball of a C*-algebra $M$. 
 
\begin{definition} \label{D.Deg1} 
A metric structure $M$ is \emph{countably degree-1  saturated} if 
for every countable family of degree-1 *-polynomials $P_n(\bar x)$ with coefficients 
in~$M$ and variables $x_n$, for $n\in \bbN$, and every family of compact 
sets $K_n\subseteq \bbR$, for $n\in \bbN$,  the following are equivalent. 
\begin{enumerate} 
\item\label{I.type.1} There are $b_n\in M_{\leq 1}$, for $n\in \bbN$, such that $P_n(\bar b)\in K_n$ for all $n$. 
\item\label{I.type.2} For every $m\in \bbN$ there are $b_n\in M_{\leq 1}$, for $n\in \bbN$, 
such that $P_n(\bar b)\in (K_n)_{1/m}$ for all $n\leq m$. 
\end{enumerate}

More generally, if $\Phi$ is a class of *-polynomials, we say that 
$M$ is   \emph{countably $\Phi$-saturated} if 
for every countable family of  *-polynomials $P_n(\bar x)$ in $\Phi$ with coefficients 
in $M$ and variables $x_n$, for $n\in \bbN$, and every family of compact 
sets $K_n\subseteq \bbR$, for $n\in \bbN$ the assertions  \eqref{I.type.1} and \eqref{I.type.2} above are equivalent. 

If $\Phi$ is the class of all *-polynomials then instead of $\Phi$-saturated we say \emph{countably quantifier-free saturated}. 
\end{definition}

Note that 
by compactness we obtain an equivalent definition if we require each $K_n$ to be a singleton. 

With the obvious definition of `degree-$n$ saturated' one might expect to have a proper hierarchy of 
levels of saturation. However, this is not the case. 
 
\begin{lemma} An algebra that is degree-2 saturated is necessarily quantifier-free saturated. 
\end{lemma}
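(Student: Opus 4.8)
The plan is to reduce an arbitrary quantifier-free type to one consisting only of degree-$\le 2$ conditions, by introducing auxiliary variables that name the intermediate products occurring in high-degree monomials, and then to invoke degree-2 saturation. So suppose we are given a countable family of *-polynomials $P_n(\bar x)$ and compact sets $K_n\subseteq\bbR$ for which \eqref{I.type.2} holds, i.e.\ the conditions $\|P_n(\bar b)\|\in K_n$ are finitely approximately satisfiable in $M_{\le 1}$; we must produce an exact solution, i.e.\ establish \eqref{I.type.1}.

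First I would carry out the syntactic reduction. Write each $P_n$ as a finite sum of monomials $a_0 z_1 a_1 z_2\cdots z_k a_k$, where each $z_\ell$ is one of $x_{i_\ell}$ or $x_{i_\ell}^*$, the $a_\ell$ lie in $M$, and call $k$ the variable-degree. For any monomial of variable-degree $k\ge 3$, replace a consecutive pair $z_\ell a_\ell z_{\ell+1}$ (where each factor is an original variable, its adjoint, or a previously introduced auxiliary) by a fresh variable $y$: introduce the defining relation $y - z_\ell\,(c a_\ell)\,z_{\ell+1}=0$, where the scalar $c=(1+\|a_\ell\|)^{-1}$ is absorbed into the coefficient so that the named product has norm $\le 1$, and substitute $(1+\|a_\ell\|)\,y$ for $z_\ell a_\ell z_{\ell+1}$ in the monomial, the scalar $1+\|a_\ell\|$ being absorbed into the adjacent coefficient. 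Each such step lowers the variable-degree by one while adding a single degree-$2$ defining relation, so after finitely many steps every monomial has variable-degree $\le 2$. Performing this for all $n$ produces a countable family of auxiliary variables $y_j$, a countable family of degree-$2$ defining relations $Q_j=0$, and reduced polynomials $\tilde P_n$ of degree $\le 2$, arranged so that whenever all the $Q_j$ vanish one has $\tilde P_n(\bar b,\bar c)=P_n(\bar b)$ identically. The new type consists of the conditions $\|Q_j\|\in\{0\}$ and $\|\tilde P_n\|\in K_n$, all of degree $\le 2$; it is countable because each $P_n$ contributes only finitely many variables and relations.

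Next I would transfer consistency and conclude. The key observation is that the defining relations can always be met \emph{exactly}, by setting each $y_j$ equal to its designated scaled product. Thus, given any $\bar b\in M_{\le 1}$ with $\|P_n(\bar b)\|\in (K_n)_{1/m}$ for $n\le m$, assign the auxiliaries $\bar c$ to be the prescribed products of the $\bar b$ (each of norm $\le 1$, since the normalizing scalars were chosen for exactly this purpose); then every $Q_j(\bar b,\bar c)=0$ and $\tilde P_n(\bar b,\bar c)=P_n(\bar b)$, whence $\|\tilde P_n(\bar b,\bar c)\|\in (K_n)_{1/m}$. Hence the new type also satisfies \eqref{I.type.2}. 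Degree-2 saturation then yields $\bar b,\bar c\in M_{\le 1}$ with $Q_j(\bar b,\bar c)=0$ for all $j$ and $\|\tilde P_n(\bar b,\bar c)\|\in K_n$ for all $n$. Since the relations force each $\bar c$ to equal the corresponding product of the $\bar b$, we get $\tilde P_n(\bar b,\bar c)=P_n(\bar b)$ and therefore $\|P_n(\bar b)\|\in K_n$ for every $n$, so $\bar b$ witnesses \eqref{I.type.1}.

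I expect the main obstacle to be the bookkeeping of the reduction rather than any analytic difficulty: one must verify that the normalization keeps every auxiliary variable inside $M_{\le 1}$ (so that degree-2 saturation is genuinely applicable), that the scalars absorbed at each step do not interfere across distinct monomials, and—most importantly—that the defining relations hold \emph{exactly} at both the approximate and the exact stages, since it is this exactness that lets finite approximate satisfiability pass through the reduction unharmed. It is also worth noting why degree $1$ would not suffice: the relations $y=z_\ell a_\ell z_{\ell+1}$ genuinely involve a product of two variables, so degree-$2$ conditions are precisely what is needed to splice variables together and bootstrap to arbitrary degree.
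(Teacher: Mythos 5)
Your proposal is correct and is essentially the paper's own argument: the paper likewise introduces a countable family of new variables $z_i$ with degree-$2$ defining relations $\|xy-z_i\|=0$ naming products of variables, iterates until every polynomial has degree at most $2$, and then invokes degree-$2$ saturation, noting that a realization of the reduced type yields one of the original. Your write-up merely adds the (worthwhile) bookkeeping that the paper elides, namely absorbing the interleaved coefficients $a_\ell$ into the defining relations and rescaling so the auxiliary variables stay in $M_{\leq 1}$.
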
 

\begin{proof} Assume $C$ is degree-2 saturated and $\bt$ is a consistent 
countable quanti\-fier-free type over $C$. 
By compactness and  Stone--Weierstrass approximation theorem we may assume that $\bt$ consists of 
formulas of the form $\|P(\bar x)\|=r$ for a polynomial $P$. 
By adding a countable set of new variables $\{z_i\}$ and formulas $\|xy-z_i\|=0$ for distinct variables $x$ and $y$ occurring in $\bt$, one can reduce the degree of all polynomials occurring in $\bt$. By 
repeating this procedure countably many times one obtains a new type $\bt'$ in countably many variables
such that $\bt'$ does not contain polynomials of degree higher than 2, it is consistent, and a realization
of $\bt'$ gives a realization of $\bt$. 
\end{proof} 

In the following it is assumed that each $P_n$ is a *-polynomial with coefficients in $M$, 
and reference to the ambient algebra $M$ is omitted whenever it is clear from the context. 
An expression of the form $P_n(\bar x)\in K_n$ is called a \emph{condition} (over $M$). 
A set of conditions is a \emph{type} (over $M$). If all conditions involve only polynomials in $\Phi$
then we say that type is a \emph{$\Phi$-type}. 
If all coefficients of polynomials occurring in type $\bt$ belong to a set $X\subseteq M$ 
then we say $\bt$ is 
\emph{type over $X$}. 
A type satisfying \eqref{I.type.2} is \emph{approximately finitely satisfiable} (in $M$), or more succinctly 
\emph{consistent} with $M$, 
and a type satisfying \eqref{I.type.1} is \emph{realized} (in $M$) by $\bar b$. 
In the latter case we also say that $M$ realizes this type. 
Thus $M$ is countably $\Phi$-saturated
if and only if every consistent $\Phi$-type over a countable subset of $M$  is realized in $M$.

\begin{remark} 
We use the term `expression' instead of `formula' in order to avoid 
confusion with formulas
of the logic for metric structures. Also, the expressions $\|P(\bar x)\|=r$ and $\|P(\bar x)\|\leq r$ 
are identified with conditions (using the terminology of  \cite{BYBHU}) 
$\|P(\bar x)\|\in \{r\}$ and $\|P(\bar x)\|\in [0,r]$, respectively. 
Finally, instead of  $\|P(\bar x)-Q(\bar x)\|=0$ we write $P(\bar x)=Q(\bar x)$.  
\end{remark} 

Recall that the  \emph{multiplier algebra} $M(A)$ of a C*-algebra $A$
is defined to be the idealizer of $A$ in any nondegenerate representation of $A$
(see e.g., \cite{Black:Operator}).  
The \emph{corona} of $A$ is the quotient $M(A)/A$.

\begin{theorem} \label{T1} 
If $A$ is a $\sigma$-unital C*-algebra then its corona $C(A)$
is countably degree-1  saturated. 
\end{theorem}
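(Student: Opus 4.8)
The plan is to realize an arbitrary consistent countable degree-$1$ type directly, using a single strictly positive element of $A$ together with a diagonal interpolation argument. First I would reduce to a clean normal form: by the Remark following Definition~\ref{D.Deg1} each compact $K_n$ may be taken to be a singleton $\{r_n\}$, so the type to realize is a countable family of conditions $\|P_n(\bar x)\|=r_n$ with each $P_n$ a degree-$1$ $*$-polynomial whose coefficients lie in $C(A)$. I fix lifts of these (countably many) coefficients to the multiplier algebra $M(A)$, so that the corona value $\|P_n(\bar b)\|$ is the image in $C(A)$ of the corresponding polynomial evaluated in $M(A)$. Consistency gives, for each $m$, contractions in $C(A)$ witnessing the first $m$ conditions up to $1/m$; since the quotient map $M(A)\to C(A)$ carries the closed unit ball onto the closed unit ball, I lift these to contractions $\bar b^{(m)}\in M(A)_{\le 1}$ with $\bigl|\,\|P_n(\bar b^{(m)})\|_{C(A)}-r_n\,\bigr|\le 1/m$ for all $n\le m$.

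Next I would set up the geometry coming from $\sigma$-unitality. Let $h\in A$ be strictly positive, $0\le h\le 1$, and manufacture, by continuous functional calculus of $h$, an increasing quasicentral approximate unit together with positive contractions $f_k\in A$ supported on spectral intervals of $h$ with widely separated, rapidly growing cut-offs, arranged so that $\sum_k f_k^2=1$ in the strict topology, $f_kf_{k'}=0$ whenever $|k-k'|\ge 2$, and --- this is the essential point --- each $f_k$ is \emph{slowly varying}, i.e.\ realized by a functional-calculus function of small Lipschitz constant. The realizer is then defined by the interpolation
\[
 b_j=\sum_k f_k\, b_j^{(m_k)}\, f_k\in M(A)_{\le 1},
\]
a strictly convergent series, where the accuracy levels satisfy $m_k\to\infty$.

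The heart of the argument is the norm computation $\|P_n(\bar b)\|_{C(A)}=r_n$. Since every $f_k$ lies in the ideal $A$, substituting the interpolation into the degree-$1$ polynomial $P_n$ and commuting the fixed coefficients past the cut-offs gives
\[
 P_n(\bar b)=\sum_k f_k\,P_n(\bar b^{(m_k)})\,f_k+E_n,
\]
where $E_n$ collects the commutator terms $[c,f_k]$ for the coefficients $c$ of $P_n$. I would choose the spectral spreads of the $f_k$ by a single diagonalization so that, for every fixed condition, $\sum_k\|[c,f_k]\|<\infty$ over its finitely many coefficients; then $E_n$ is a norm-convergent sum of elements of $A$, hence $E_n\in A$ and disappears in $C(A)$. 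Finally, almost-orthogonality of the $f_k$ yields $\bigl\|\sum_k f_k\,P_n(\bar b^{(m_k)})\,f_k\bigr\|_{C(A)}=\limsup_k\|P_n(\bar b^{(m_k)})\|_{C(A)}$ (upper bound from the triangle inequality on consecutive blocks, lower bound by testing on states concentrated deep inside a single far block), and since $m_k\to\infty$ this $\limsup$ equals $r_n$ by the consistency estimates. Thus $\bar b$ realizes the type.

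The main obstacle is exactly the commutator step. The coefficients are multipliers, and for a general $a\in M(A)$ one cannot make $\|[a,e_n]\|\to 0$ with a sharp approximate unit --- the bilateral shift against the coordinate projections is the standard obstruction, so the naive claim that $P_n$ of the interpolation is, modulo $A$, the block interpolation of $P_n(\bar b^{(m_k)})$ simply fails. The device that rescues the argument is the use of \emph{slowly varying} functional-calculus cut-offs, for which $\|[a,f_k]\|$ is controlled by the Lipschitz constant of the defining function and can be driven to zero even though $a$ moves mass between blocks. The delicate point, and the crux of the whole proof, is to balance this slow variation (which forces large overlaps) against the almost-orthogonality needed for the corona norm to collapse to a $\limsup$, and to carry out this balancing uniformly over the countably many conditions by a single diagonal choice of the cut-off scale.
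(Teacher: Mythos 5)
Your overall architecture coincides with the paper's: lift approximate realizations of increasing accuracy to contractions in $M(A)$, build an almost-orthogonal partition of unity $(f_k)$ with $\sum_k f_k^2=1$ strictly and $f_kf_{k'}=0$ for $|k-k'|\ge 2$, interpolate block-diagonally, push the coefficients inside the blocks modulo $A$ via commutator estimates, and compute the corona norm of the block sum as a $\limsup$ of block norms. This is exactly the scheme of Lemma~\ref{L1} and items \eqref{I.f2}--\eqref{I.inX} in the paper.

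The gap is in the one step you yourself identify as the crux: making the cut-offs asymptotically commute with the multiplier coefficients. The proposed device --- taking $f_k=g_k(h)$ for a fixed strictly positive $h\in A$ with $g_k$ of small Lipschitz constant --- does not work. First, it is self-defeating: an approximate unit element must have $g_k(0)=0$ and $g_k(t)\approx 1$ for $t$ above a threshold $\delta_k\to 0$, so the Lipschitz constants of the $g_k$ on the spectrum of $h$ necessarily blow up rather than shrink. Second, a Lipschitz bound on $g$ controls $\|[a,g(h)]\|$ only through $\|[a,h]\|$, which for a general multiplier $a$ is not small; and no recipe depending only on $h$ (rather than on the given countable set of coefficients) can yield an approximate unit quasicentral for all of $M(A)$ --- in the Calkin case a unitary permuting the eigenbasis of $h$ with unbounded jumps defeats any fixed sequence of functions of $h$. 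The tool your argument is missing is Arveson's theorem (\cite{Arv:Notes}): inside the convex hull of \emph{any} approximate unit of $A$ there is one quasicentral for a prescribed separable subset of $M(A)$, so the cut-offs must be chosen \emph{after}, and depending on, the coefficients and the lifted approximate solutions; quasicentrality of $e_n$ is then transferred to $f_n=(e_{n+1}-e_n)^{1/2}$ by Pedersen's estimate that $\|[a,b]\|\le\e\le 1/4$ implies $\|[a^{1/2},b]\|\le 5\e^{1/2}/4$. A secondary omission: for the lower bound $\|\sum_k f_kc_kf_k\|_{C(A)}\ge\limsup_k\|\pi(c_k)\|$ it is not enough to test states ``deep inside a far block''; one needs $\|f_kc_kf_k\|\ge\|\pi(c_k)\|-\e_k$ for the finitely many relevant elements $c_k$, which again requires a recursive refinement of the approximate unit using that $A$ is essential in $M(A)$ (condition \eqref{T1.2} of Lemma~\ref{L1}).
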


Theorem~\ref{T1} will be proved in \S\ref{S.proof.T1}.

\begin{corollary} \label{C.Mn}
If $A$ is a $\sigma$-unital C*-algebra then $M_n(C(A))$ 
is countably degree-1 saturated for every $n\in\bbN$. 
\end{corollary}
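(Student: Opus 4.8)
The plan is to reduce the statement to Theorem~\ref{T1} by identifying $M_n(C(A))$ with the corona of a single $\sigma$-unital algebra. The key algebraic fact is that $M_n(-) = M_n(\bbC) \otimes (-)$ commutes with the corona construction, so that
\[
M_n(C(A)) \cong C(M_n(A)),
\]
where $M_n(A)$ is again $\sigma$-unital. Once this isomorphism is in place, Theorem~\ref{T1} applied to $M_n(A)$ gives that $C(M_n(A))$ is countably degree-1 saturated, and countable degree-1 saturation transfers across the isomorphism: an isometric $*$-isomorphism $\varphi$ carries each degree-1 $*$-polynomial $P_n$ with coefficients in $M_n(C(A))$ to a degree-1 $*$-polynomial $\varphi(P_n)$ with coefficients in $C(M_n(A))$, maps the unit ball onto the unit ball, and preserves $\|P_n(\bar b)\|$, so conditions $P_n(\bar x)\in K_n$ are preserved in both directions. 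Hence realizability of a type and consistency of a type are invariant under $\varphi$, and the corollary follows.

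First I would verify that $M_n(A)$ is $\sigma$-unital whenever $A$ is: if $h\in A$ is strictly positive, then $h\otimes 1_n$ is strictly positive in $M_n\otimes A = M_n(A)$ (equivalently, a countable approximate unit $(e_k)$ for $A$ yields the countable approximate unit $(e_k\otimes 1_n)$ for $M_n(A)$). Next I would invoke the standard identity for multiplier algebras of tensor products by a unital finite-dimensional factor,
\[
M(M_n(A)) = M(M_n\otimes A) \cong M_n\otimes M(A) = M_n(M(A)),
\]
so that $M_n$ passes inside the multiplier algebra. Finally, applying $M_n(-)$ to the short exact sequence $0\to A\to M(A)\to C(A)\to 0$ — the functor $M_n(-)=M_n\otimes(-)$ is exact — gives the short exact sequence $0\to M_n(A)\to M_n(M(A))\to M_n(C(A))\to 0$, and combining this with the previous identification yields $M_n(C(A)) \cong M(M_n(A))/M_n(A) = C(M_n(A))$, as desired.

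The main point to get right, though it is entirely standard, is the multiplier-algebra identity $M(M_n(A))\cong M_n(M(A))$; the $\sigma$-unitality of $M_n(A)$ and the exactness of $M_n(-)$ are routine. I would regard this structural bookkeeping, rather than any analytic difficulty, as the only obstacle. One could alternatively argue directly, encoding a degree-1 type in $n\times n$ matrix variables over $M_n(C(A))$ as a type in the $n^2$ scalar entry variables over $C(A)$ and appealing to Theorem~\ref{T1} for $C(A)$; there the delicate step would be expressing the matrix unit-ball constraint $\|X\|\le 1$ through conditions on the entries, which is why the corona-of-$M_n(A)$ route is cleaner.
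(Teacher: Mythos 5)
Your proposal is correct and follows the same route as the paper: identify $M(M_n(A))\cong M_n(M(A))$, conclude $M_n(C(A))\cong C(M_n(A))$ with $M_n(A)$ still $\sigma$-unital, and apply Theorem~\ref{T1}. The extra details you supply (strict positivity of $h\otimes 1_n$, exactness of $M_n(-)$, invariance of saturation under isomorphism) are exactly the routine verifications the paper leaves implicit.
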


\begin{proof} The universality property of the multiplier algebra
easily implies that $M(M_n(A))$ and $M_n(M(A))$ are isomorphic, via
the natural isomorphism that fixes $A$. Therefore 
 $M_n(C(A))$ is isomorphic to $C(M_n(A))$
and we can apply Theorem~\ref{T1}. 
\end{proof} 

The following will be proved as  Theorem~\ref{T1.0}. 

\begin{theorem} \label{T1.0.0} 
Assume $A$ is a $\sigma$-unital C*-algebra such that  
 for every separable subalgebra $B$ of $M(A)$ there  is a $B$-quasicentral 
approximate  unit for $A$ consisting of projections. 
Then its corona $C(A)$
is  countably quantifier-free saturated. 
\end{theorem}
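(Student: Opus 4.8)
The plan is to prove that under the hypothesis that every separable subalgebra $B$ of $M(A)$ admits a $B$-quasicentral approximate unit for $A$ consisting of projections, the corona $C(A)$ is countably quantifier-free saturated. The key idea is to leverage Theorem~\ref{T1}, which already gives countable degree-1 saturation, and to bootstrap this to full quantifier-free saturation by using the projection-valued quasicentral approximate unit to linearize higher-degree polynomials. The technical tool for the reduction is exactly the one exhibited in the proof of the degree-2 lemma above: one introduces auxiliary variables $z_i$ together with conditions of the form $\|xy - z_i\| = 0$, thereby rewriting a type containing polynomials of arbitrary degree as a degree-1 type in more variables, whose realization recovers a realization of the original type. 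So the heart of the matter is to show that the degree-1 types arising from this reduction remain realizable in $C(A)$ — that is, to show $C(A)$ is countably degree-1 saturated \emph{with respect to these enlarged types}, which by Theorem~\ref{T1} it already is, provided consistency is preserved.

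First I would fix a consistent countable quantifier-free type $\bt$ over a countable subset $X\subseteq C(A)$. As in the degree-2 lemma, by Stone--Weierstrass and compactness I would reduce to conditions of the form $\|P(\bar x)\| = r$. The standard linearization introduces new variables and the relations $\|x_i x_j - z_{ij}\| = 0$; iterating countably often produces a degree-1 type $\bt'$ whose realization yields a realization of $\bt$. Because Theorem~\ref{T1} already supplies degree-1 saturation unconditionally, the difficulty is not in realizing $\bt'$ once it is known consistent — it is in \emph{verifying that consistency is preserved by the linearization}, and more subtly in controlling the \emph{norms} of the witnesses so they land in the unit ball $C(A)_{\leq 1}$. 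Here is where the hypothesis on the projection-valued quasicentral approximate unit enters: lifting to the multiplier algebra $M(A)$ and using a quasicentral approximate unit $(e_\lambda)$ of projections for $A$ that nearly commutes with the separable subalgebra generated by the coefficients of $\bt$ and the approximate witnesses, one can splice together the finite approximate solutions coming from consistency of $\bt$ into a genuine element of $M(A)$, whose image in $C(A)$ realizes the full type. The projections allow one to pass between blocks $e_{\lambda_{n+1}} - e_{\lambda_n}$ without the cross-terms that would otherwise obstruct the multiplicative relations $x_i x_j = z_{ij}$ from holding exactly in the quotient.

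Concretely, the main construction would proceed by a back-and-forth / telescoping argument along an increasing sequence of projections $p_n = e_{\lambda_{n+1}} - e_{\lambda_n}$ from the quasicentral approximate unit. At stage $n$ I would choose lifts $\tilde b_{i,n} \in M(A)$ of the approximate witnesses guaranteed by consistency (condition~\eqref{I.type.2} for $m = n$), compress them by the spectral projections $p_n$, and define the candidate witness $b_i = \sum_n p_n \tilde b_{i,n} p_n$ (or an asymmetric variant), the sum converging strictly in $M(A)$. Quasicentrality ensures that the commutators $[p_n, \tilde b_{i,n}]$ lie in $A$ and tend to zero, so that the products $b_i b_j - \sum_n p_n \tilde b_{i,n} \tilde b_{j,n} p_n$ differ from a lift of the exact product by an element of $A$, hence vanish in the corona. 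This is what forces the linearizing relations $x_i x_j = z_{ij}$ to hold \emph{exactly} in $C(A)$ rather than merely approximately, thereby closing the gap between the degree-1 realization of $\bt'$ and the quantifier-free realization of $\bt$.

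The hard part will be the third step: verifying that the telescoped element really satisfies the \emph{exact} norm conditions $\|P(\bar b)\| = r$ and not merely approximate ones, and that all cross-block error terms are absorbed into $A$. The reason projections (rather than an arbitrary positive approximate unit) are essential is that orthogonality $p_n p_{n'} = 0$ for $n \neq n'$ eliminates the interference between blocks that would otherwise leave residual terms outside the ideal; with merely positive $e_\lambda$ one would only recover approximate multiplicativity, giving degree-1 saturation back but not the full quantifier-free version. I would expect the estimate controlling the tails of the strict sum and the commutator norms $\|[p_n, \tilde b_{i,n}]\|$ to be the most delicate bookkeeping, and I anticipate organizing it so that at each finite stage the error is summably small, invoking countable degree-1 saturation (Theorem~\ref{T1}) at the end to realize the limiting degree-1 type $\bt'$ once its consistency has been certified.
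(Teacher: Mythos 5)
Your telescoping construction in the third paragraph is essentially the right one, but the logical frame you hang it on --- linearize the type and then invoke Theorem~\ref{T1} --- contains a genuine gap. Introducing auxiliary variables $z_{ij}$ with the relations $\|x_i x_j - z_{ij}\|=0$ reduces an arbitrary quantifier-free type to one whose polynomials have degree at most $2$, not degree $1$: the defining relation $x_i x_j = z_{ij}$ is itself a degree-2 condition (a product of two \emph{variables} is not of the form $a x_j b$ with $a,b$ coefficients from the algebra), and no amount of iteration gets you below degree $2$. This is exactly why the lemma in \S\ref{S.Intro} is stated as ``degree-2 saturated implies quantifier-free saturated'' and not with ``degree-1'' in the hypothesis. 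Moreover the gap is not repairable by a cleverer consistency check: countable degree-1 saturation is strictly weaker than countable quantifier-free saturation, since the Calkin algebra is degree-1 saturated by Theorem~\ref{T1} but not quantifier-free saturated by Proposition~\ref{P2}. So the final step of your plan, ``invoke countable degree-1 saturation (Theorem~\ref{T1}) at the end to realize the limiting type $\bt'$,'' cannot be carried out: $\bt'$ is not a degree-1 type, and verifying its consistency was never the real difficulty.

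The proof in \S\ref{S.proof.T1} skips the linearization entirely and verifies directly that your block-diagonal elements $b_i=\sum_n p_n \tilde b_{i,n} p_n$ realize the original type, polynomial by polynomial. The key estimate, which is where the projection hypothesis is genuinely used, is the second inequality of Lemma~\ref{L.P.K}: for a projection $q$ and a *-polynomial $P$ of degree $d$, the quantity $\| q P(\bar b)- q P( q \bar b) q \|$ is controlled by $\max_c\|[q,c]\|$ over the coefficients and the $b_j$'s, because $q=q^{d+1}$ lets one insert copies of $q$ between consecutive factors of \emph{every} monomial, of any degree. Combined with $p_n b_i p_n = p_n \tilde b_{i,n} p_n$ and the orthogonality of the blocks, this gives $\|\pi(P^0(\bar b))\| = \limsup_n \| p_n P^0(\tilde b_{1,n},\dots) p_n\| = r$ directly. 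Your intuition that orthogonality of the $p_n$ kills cross-block interference, and that a merely positive quasicentral approximate unit would only recover degree-1 saturation, is correct; but it has to be turned into this uniform estimate applied to each $P_n$ in the type, rather than applied only to the products $x_i x_j$ and then followed by an inapplicable appeal to degree-1 saturation.
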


We shall show that the Calkin algebra fails the conclusion of Theorem~\ref{T1.0.0}, and 
therefore that Theorem~\ref{T1} essentially gives an optimal conclusion in its case.

\section{Applications}
\label{S.App}
Most of our applications require only types with a single variable, or  
so-called \emph{1-types}.
We shall occasionally use shortcuts such as $a=b$ for $\|a-b\|=0$ or $a\leq b$
for $ab=a$ (the latter assuming both $a$ and $b$ are positive) in order to simplify the notation. 
We say that $c$ \emph{$\e$-realizes} type $\bt$ if 
for all conditions $\|P(x)\|\in K$ in $\bt$ we have $\|P(c)\|\in (K)_\e$. 
Therefore a type is consistent if and only if each of its finite subsets is $\e$-realized 
for each $\e>0$.

\subsection{A self-strengthening} 
 We start with a self-strengthening of the notion of approximate finite satisfiability, 
 stated only for 1-types. An obvious generalization 
 to arbitrary types is left to the reader.

\begin{lemma} \label{L.sa} If $\Phi$ includes all degree-1 *-polynomials and 
$C$ is countably $\Phi$-saturated then every countable 
$\Phi$-type $\bt$ that is approximately finitely satisfiable by  self-adjoint (positive) 
elements is realized by a self-adjoint (positive) element. 

Moreover, if $\bt$ is approximately finitely satisfiable by  self-adjoint elements whose
spectrum is included  in the interval $[r,s]$, then $\bt$ is realized 
by a self-adjoint element whose spectrum is included in $[r,s]$. 
\end{lemma}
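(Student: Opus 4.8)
The plan is to realize the self-adjointness (or positivity, or spectral) constraint as additional conditions that can be added to the type $\bt$ without destroying consistency, and then invoke $\Phi$-saturation directly. The key observation is that self-adjointness of a realizing element is equivalent to the single degree-1 condition $\|x - x^*\| = 0$, i.e., $x - x^* \in \{0\}$, and positivity can be encoded by $\|x - x^*\|=0$ together with a condition forcing the spectrum into $[0,\infty)$; crucially all of these are (or can be approximated by) conditions involving only degree-1 *-polynomials, so they lie in $\Phi$ by hypothesis.

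First I would handle the self-adjoint case. Let $\bt' = \bt \cup \{\|x - x^*\| = 0\}$. Since $\bt$ is approximately finitely satisfiable by self-adjoint elements, any $\e$-realization of a finite subset of $\bt$ by a self-adjoint $c$ automatically satisfies $\|c - c^*\| = 0 \in (\{0\})_\e$, so $\bt'$ is again approximately finitely satisfiable, hence consistent. By countable $\Phi$-saturation, $\bt'$ is realized by some $b$, and the condition $\|b - b^*\| = 0$ forces $b = b^*$. Thus $b$ is a self-adjoint realization of $\bt$. The positive case is the same, adjoining instead a condition that forces positivity; the natural choice is $x = x^*$ together with $\|\,x - |x|\,\| = 0$, but since $|x|$ is not a *-polynomial I would instead force the spectrum into $[0,\infty)$ using the spectral mechanism of the final paragraph, taking $[r,s] = [0,\|\cdot\|]$ after first bounding the norm.

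For the spectral refinement, suppose $\bt$ is approximately finitely satisfiable by self-adjoint elements with spectrum in $[r,s]$. Let $g\colon \bbR \to \bbR$ be the continuous retraction onto $[r,s]$ (so $g$ is the identity on $[r,s]$ and constant outside), and let $h = \operatorname{dist}(\,\cdot\,, [r,s])$, a continuous function vanishing exactly on $[r,s]$. For a self-adjoint element $a$, having spectrum in $[r,s]$ is equivalent to $\|h(a)\| = 0$, i.e. $\|a - g(a)\| = 0$. These are not *-polynomial conditions, but by the Stone--Weierstrass theorem I can approximate $g$ and $h$ uniformly on the relevant bounded interval by polynomials and add the resulting polynomial conditions; by compactness (as in the reduction noted after Definition~\ref{D.Deg1}) the augmented type remains consistent, and any realization has spectrum in $[r,s]$.

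The main obstacle, and the point requiring the most care, is that the polynomial approximations to $g$ and $h$ have degree greater than one, so the augmented type is no longer a degree-1 type. This is exactly why the hypothesis requires $\Phi$ to \emph{include} all degree-1 *-polynomials rather than equal them: the intended application is to a $\Phi$ such as the quantifier-free polynomials, where these spectral conditions already lie in $\Phi$. I would therefore state the argument so that the degree-1 polynomials from $\bt$ itself are handled by the degree-1 hypothesis, while the finitely many auxiliary spectral conditions are permitted precisely because they belong to the ambient class $\Phi$; verifying that the spectral approximation stays inside $\Phi$ (and that the consistency is preserved under the $\e/2$ bookkeeping between $\bt$'s tolerance and the Stone--Weierstrass error) is the delicate routine step.
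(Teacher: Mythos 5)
Your first paragraph (the self-adjoint case, adjoining $\|x-x^*\|=0$) is exactly the paper's argument and is fine. The gap is in how you handle positivity and the spectral constraint. You propose to encode ``spectrum of $x$ contained in $[r,s]$'' by approximating the retraction onto $[r,s]$ and the function $\dist(\cdot,[r,s])$ by polynomials via Stone--Weierstrass, and you concede that the resulting conditions have degree greater than one, arguing that this is acceptable because $\Phi$ is only required to \emph{include} the degree-1 polynomials. That reading of the hypothesis is backwards: the hypothesis is there to guarantee that the \emph{added} conditions (which must be degree-1) lie in $\Phi$, not to license adding conditions outside the degree-1 class. The lemma has to hold when $\Phi$ is exactly the class of degree-1 *-polynomials --- that is precisely how it is invoked later (in the proof of Lemma~\ref{L.ideal}, ``By Lemma~\ref{L.sa} (applied with $[r,s]=[0,1]$)'' for a merely countably degree-1 saturated $M$). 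For that $\Phi$ your augmented type is not a $\Phi$-type and saturation cannot be applied, so the proof as written does not establish the statement.

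The missing observation, which makes the whole Stone--Weierstrass detour unnecessary, is that the spectral constraint is \emph{already} a degree-1 condition: for a self-adjoint $a$ one has $\Sp(a)\subseteq[r,s]$ if and only if $\|a-\tfrac{r+s}{2}\cdot 1\|\leq \tfrac{s-r}{2}$ (elementary functional calculus), and $x-\tfrac{r+s}{2}\cdot 1$ is a degree-1 *-polynomial. So one simply adjoins $\|x-x^*\|=0$ and $\|x-\tfrac{r+s}{2}\cdot 1\|\leq\tfrac{s-r}{2}$ to $\bt$; approximate finite satisfiability by self-adjoint elements with spectrum in $[r,s]$ gives consistency of the augmented type, and any realization is as required. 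For positivity the paper first uses compactness to fix a value $r$ with $\bt\cup\{\|x\|=r\}$ approximately finitely satisfiable by positive elements, and then adds $x=x^*$ and $\|x-r\cdot 1\|\leq r$, using the fact that a self-adjoint $b$ is positive if and only if $\|b-\|b\|\cdot 1\|\leq\|b\|$; again everything stays degree-1. Your instinct that positivity should be reduced to a spectral condition was right, but the reduction must be carried out with these degree-1 conditions rather than with polynomial functional calculus.
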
 

\begin{proof}  If $\bt$ is approximately finitely satisfiable by a self-adjoint element, 
then the type $\bt_1$ obtained by adding $x=x^*$ to $\bt$ is still approximately 
finitely satisfiable and countable, and therefore realized. Any realization of
$\bt_1$ is   a self-adjoint realization of $\bt$. 

Now assume $\bt$ is 
approximately finitely satisfiable by positive elements. 
By compactness, there is $r\in K$ such that $\bt\cup \{\|x\|=r\}$ is 
approximately finitely satisfiable by a positive element. 
Let $\bt_2=\bt\cup \{\|x\|=r, x=x^*, \|x-r\cdot 1\|\leq r\}$. 
A simple continuous functional calculus argument
shows that  for a self-adjoint $b$ we have that $b\geq 0$ if and ony if 
$\|b-\|b\|\cdot 1\|\leq \|b\|$. The proof is completed analogously to the case of a self-adjoint operator. 

Now assume $\bt$ is approximately finitely satisfiable by elements whose spectrum is 
included in $[r,s]$. Add conditions $\|x-x^*\|=0$ and  $\|x - (r+s)/2\|\leq (s-r)/2$ to $\bt$. 
The second condition is satisfied by a self-adjoint element iff its spectrum is included in the interval 
$[r,s]$. Therefore the new type is approximately finitely satisfiable and its realization 
is as required. 
\end{proof}

Note that the assumption of Lemma~\ref{L.un} is necessarily stronger than the assumption of
Lemma~\ref{L.sa} by results of \S\ref{S.Limiting}. 

\begin{lemma} \label{L.un} If 
$C$ is countably quantifier-free saturated then every countable 
quantifier-free type that is approximately finitely satisfiable by a unitary (projection)  
is realized by a unitary  (projection, respectively). 
\end{lemma}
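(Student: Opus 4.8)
The plan is to follow the template of Lemma~\ref{L.sa}: encode the property of being a unitary (respectively, a projection) by a finite set of quantifier-free conditions, append them to $\bt$, and then invoke countable quantifier-free saturation of $C$. The key observation is that both properties are cut out by polynomial equations of degree at most $2$, which is precisely why the stronger hypothesis of quantifier-free saturation, rather than mere degree-$1$ saturation, is needed here; this also accounts for the remark preceding the statement.

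Consider first the unitary case. An element $u$ of a unital C*-algebra is a unitary if and only if $u^*u=1$ and $uu^*=1$, equivalently $\|u^*u-1\|=0$ and $\|uu^*-1\|=0$. Let $\bt'$ be the type obtained from $\bt$ by adjoining these two conditions; they are quantifier-free since $C$ is unital, the constant $1$ being an admissible coefficient. I claim $\bt'$ is still approximately finitely satisfiable. Given a finite subtype and $\e>0$, by hypothesis there is a unitary $u\in C$ that $\e$-realizes the corresponding finite subset of $\bt$; since a genuine unitary satisfies the two new conditions exactly, that same $u$ also $\e$-realizes the enlarged finite subset of $\bt'$. Hence $\bt'$ is consistent.

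By countable quantifier-free saturation, $\bt'$ is realized by some $b\in C_{\leq 1}$. Any such $b$ satisfies $\|b^*b-1\|=0$ and $\|bb^*-1\|=0$ exactly, so $b^*b=bb^*=1$ and $b$ is a unitary realizing the remaining conditions of $\bt$, as required. The projection case is identical, using instead the defining conditions $\|x-x^*\|=0$ and $\|x^2-x\|=0$: a genuine projection satisfies both without error, so the enlarged type remains approximately finitely satisfiable, and a realization $b$ satisfies $b=b^*$ and $b^2=b$, hence is a projection realizing $\bt$. In both cases the realization automatically lies in the closed unit ball, so the membership requirement $b\in C_{\leq 1}$ of the definition of realization is met. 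The only point requiring any care is the verification that appending the degree-$2$ conditions preserves approximate finite satisfiability, and this is immediate from the fact that the approximating unitaries (respectively, projections) satisfy those conditions exactly.
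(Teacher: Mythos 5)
Your proposal is correct and follows exactly the paper's own argument: adjoin the conditions $xx^*=1$, $x^*x=1$ (respectively $x=x^*$, $x^2=x$) to the type, note that the approximating unitaries (projections) satisfy these exactly so consistency is preserved, and apply countable quantifier-free saturation. You have merely written out the details that the paper leaves as ``just like the proof of Lemma~\ref{L.sa}.''
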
 

\begin{proof} This is just like the proof of Lemma~\ref{L.sa}, but adding conditions 
$xx^*=1$ and $x^*x=1$ in the unitary case and $x=x^*$ and $x^2=x$ in the projection case. 
\end{proof} 

In Proposition~\ref{P1}  and Proposition~\ref{P2} 
we prove that there is a countable type over the  Calkin algebra 
that is approximately finitely satisfiable by a unitary but not realized by a unitary. 
By Lemma~\ref{L.un}, Calkin algebra is not quantifier-free saturated.

\subsection{Largeness of countably saturated C*-algebras}

If $C$ is a finite-dimensional C*-algebra then its unit ball is compact, and this 
easily implies $C$ is countably saturated. 

\begin{prop} If $C$ is countably degree-1 saturated then it is either finite-dimensional
or nonseparable. 
In the latter case, $C$ even has no separable maximal abelian subalgebras. 
\end{prop}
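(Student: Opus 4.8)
The plan is to establish both claims by a single diagonalization device: build a consistent countable degree-$1$ type over a hypothetical countable dense set and then let saturation produce a realizer that contradicts separability.

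For the dichotomy I would argue the contrapositive, proving that a separable countably degree-$1$ saturated $C$ must be finite-dimensional. Recall that a C*-algebra is finite-dimensional precisely when its unit ball is norm-totally-bounded. So if $C$ were infinite-dimensional there would be $\delta>0$ such that $C_{\leq 1}$ is covered by no finite family of open balls of radius $\delta$. Fixing a countable dense set $\{a_n:n\in\bbN\}\subseteq C_{\leq 1}$, I would consider the $1$-type $\bt=\{\,\|x-a_n\|\in[\delta,2]:n\in\bbN\,\}$; each condition uses the degree-$1$ polynomial $x-a_n$ and the compact set $[\delta,2]$ (note $\|x-a_n\|\le 2$ on the unit ball). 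Non-total-boundedness means that for any finite set of indices there is $b\in C_{\leq 1}$ with $\dist(b,\{a_{n_i}\})\ge\delta$, so each finite subtype is satisfied outright and $\bt$ is consistent. Countable degree-$1$ saturation then yields $b\in C_{\leq 1}$ with $\|b-a_n\|\ge\delta$ for all $n$, contradicting the density of $\{a_n\}$ in $C_{\leq 1}$.

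For the second claim, assume $C$ is nonseparable and $A\subseteq C$ is a separable maximal abelian subalgebra. A short argument shows that a finite-dimensional maximal abelian subalgebra would force $C$ to be finite-dimensional (its unit would be a unit for $C$ and each corner would collapse to the scalars), so $A$ is infinite-dimensional and $A_{\leq 1}$ is not totally bounded; fix $\delta>0$ witnessing this inside $A$ and a countable dense set $\{a_n\}\subseteq A_{\leq 1}$. I would now form the type $\bt'$ consisting of the commutation conditions $\|x a_n-a_n x\|=0$ together with the distance conditions $\|x-a_n\|\in[\delta,2]$; all of these are degree-$1$. Every finite subtype is witnessed by an element of $A$ itself, since every element of $A$ commutes exactly with all $a_n$ and, by non-total-boundedness of $A_{\leq 1}$, one can choose such an element at distance $\ge\delta$ from the finitely many relevant $a_n$. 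Hence $\bt'$ is consistent and, by saturation, realized by some $b\in C_{\leq 1}$.

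The loop is then closed through maximality. The realizer $b$ commutes with every $a_n$, hence with all of $A$ by density and continuity, and since $A$ is self-adjoint also $b^*\in A'\cap C$; as $A$ is maximal abelian we have $A'\cap C=A$, so $b\in A_{\leq 1}$. This forces $\dist(b,\{a_n\})=0$, contradicting $\|b-a_n\|\ge\delta$ for all $n$. I expect the only genuinely delicate points to be the verifications of approximate finite satisfiability: that non-total-boundedness of the relevant unit ball uniformly supplies witnesses at distance $\ge\delta$ from an arbitrary finite set, and, in the masa case, that these witnesses can be taken to commute with the prescribed elements — which is automatic precisely because they are chosen inside the abelian algebra $A$.
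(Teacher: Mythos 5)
Your proof is correct and follows essentially the same route as the paper: a countable degree-$1$ type asserting that $x$ is uniformly far from a countable dense subset (plus, for the masa claim, that $x$ commutes with it), shown consistent by witnesses inside the abelian algebra and then realized by saturation to contradict density and maximality. The only cosmetic differences are that you derive the dichotomy directly from non-total-boundedness of the unit ball rather than via the masa statement, and you use Riesz-type non-total-boundedness where the paper invokes an explicit $1$-separated sequence of positive norm-one elements (Ogasawara).
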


\begin{proof} 
Assume $C$ is infinite-dimensional and let $A$ be its masa. 
Then $A$ is infinite-dimensional and there is a sequence 
of positive operators $a_n$, for $n\in \bbN$,  of norm $1$ such that $\|a_m-a_n\|=1$ 
(cf. \cite{O:findim} or \cite[Lemma~5.2]{FaHaSh:Model1}).

Assume $A$ is separable, and fix a countable dense subset $b_n$, for $n\in \bbN$, 
of its unit ball. 
 The type $\bt$ consisting of all conditions of the form  
$\|x-b_n\|\geq 1/2$ and $xb_n=b_n x$,  for $n\in \bbN$, 
together with $\|x\|=1$, 
is consistent. 
This is because each of its finite subsets is realized by $a_m$ for a large enough $m$. 
Otherwise, there are $n$, $i$ and $j$ such that $\|b_n-a_i\|<1/2$ and $\|b_n-a_j\|<1/2$. 
By countable saturation some $c\in C$ realizes $\bt$. 
Then $c\in A'\setminus A$, contradicting the assumed maximality of $A$. 
\end{proof}

\begin{lemma} \label{L.cta} 
Assume $C$ is countably $\Phi$-saturated and $\Phi$ includes all degree-1 polynomials. 
 If $A$ is a separable subalgebra of $C$ then the relative 
 commutant of $A$ is countably $\Phi$-saturated. 
 
 Moreover, if $C$ is infinite-dimensional then $A'\cap C$ is nonseparable.
 \end{lemma}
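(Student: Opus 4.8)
The plan is to realize a $\Phi$-type over the relative commutant $A'\cap C$ inside $C$ itself, by augmenting it with enough degree-$1$ conditions to force the realization to commute with $A$. Let $\bt$ be a consistent countable $\Phi$-type over $A'\cap C$ in variables $x_n$. Since $A$ is separable, fix a countable dense subset $\{a_k : k\in\bbN\}$ of $A$. The idea is to pass to the enlarged type
\[
\bt' = \bt \cup \{\, x_n a_k = a_k x_n : n,k\in\bbN \,\}
\]
over $C$; this is still a countable $\Phi$-type (the new conditions $\|x_n a_k - a_k x_n\|=0$ use only degree-$1$ polynomials, which lie in $\Phi$ by hypothesis), so if I can show $\bt'$ is consistent with $C$ then countable $\Phi$-saturation of $C$ yields a realization $\bar b$, and the commutation conditions guarantee each $b_n\in A'\cap C$. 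A realization of $\bt'$ in $C$ is then automatically a realization of $\bt$ in $A'\cap C$.

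The key step is therefore verifying that $\bt'$ is consistent with $C$, i.e.\ that every finite subset is approximately satisfiable. The subtle point is that $\bt$ is only assumed consistent \emph{over $A'\cap C$}, so its finite subsets are $\e$-realized by tuples \emph{already commuting with all of $A$}; such a tuple simultaneously satisfies the relevant finitely many original conditions up to $\e$ and the new commutation conditions exactly. Thus a finite fragment of $\bt'$ involving original conditions $P_0,\dots,P_{m}$ and commutators with $a_0,\dots,a_m$ is witnessed by any tuple from $A'\cap C$ that $\e$-realizes the corresponding fragment of $\bt$: it lands in $(K_i)_\e$ for the original conditions and makes the commutators vanish. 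Here I use that the dense $a_k$ suffice: commuting with a dense set up to $0$ forces commuting with $A$, and conversely an element of $A'\cap C$ commutes with every $a_k$ on the nose, so no approximation is lost on the new conditions. This is the heart of the argument and the main thing to get right, since one must be careful that consistency of $\bt$ is relative to the correct ambient algebra.

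For the moreover clause, I would exhibit an explicit countable $\Phi$-type over $A'\cap C$ that is consistent but whose realizations form an uncountable almost-disjoint family, forcing nonseparability. Assuming $C$ is infinite-dimensional, by the earlier largeness proposition $C$ already contains a sequence $a_n$ of norm-$1$ positive elements with $\|a_m-a_n\|=1$; I would instead work inside $A'\cap C$, using that it is infinite-dimensional (which follows since it is itself countably $\Phi$-saturated and contains the scalars together with $A$, so cannot be finite-dimensional once $C$ is). Then the same construction that proved nonseparability for $C$ applies verbatim to $A'\cap C$: a type asserting $\|x\|=1$, $\|x-b_n\|\geq 1/2$, and commutation with a fixed countable dense subset $\{b_n\}$ of any putative separable part is consistent but unrealizable there, contradicting separability. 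The main obstacle I anticipate is confirming $A'\cap C$ is infinite-dimensional and that the norm-separated sequence can be chosen \emph{within} $A'\cap C$; once that is in hand the nonseparability argument reduces to the one already given.
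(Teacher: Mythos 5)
Your first half is correct and is exactly the paper's argument: adjoin the commutation conditions $\|a_kx_n-x_na_k\|=0$ for a countable dense $\{a_k\}\subseteq A$ (these are degree-1, hence in $\Phi$), observe that finite fragments of the augmented type are $\e$-realized by the same tuples from $A'\cap C$ that witness consistency of $\bt$ (they satisfy the new conditions exactly), and note that any realization in $C$ lands in $A'\cap C$. No issues there.

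The ``moreover'' clause, however, has a genuine gap, and it sits precisely at the point you flag as ``the main obstacle.'' Your justification that $A'\cap C$ is infinite-dimensional --- that it ``contains the scalars together with $A$'' --- is false: $A'\cap C$ contains the center of $A$, not $A$ itself (unless $A$ is abelian), and for a noncommutative $A$ with trivial center this gives you nothing. The whole content of the second half of the lemma is to produce \emph{witnesses} in (or converging into) $A'\cap C$ that are uniformly norm-separated from $A$; without them you cannot verify consistency of your separating type, and you cannot invoke the largeness proposition because you have not ruled out that $A'\cap C$ is finite-dimensional. The paper supplies the missing witnesses via the Akemann--Pedersen dichotomy \cite{AkPed}: after enlarging $A$ to be infinite-dimensional, either $A$ is a continuous trace algebra, in which case its center is infinite-dimensional and contains a $1$-separated sequence of contractions, or else $A$ admits a nontrivial central sequence; in either case these elements approximately satisfy the type $\{\|a_nx-xa_n\|=0,\ \|a_nx-a_n\|\geq 1/2 : n\in\bbN\}$, whose realization is an element of $A'\cap C$ at distance $\geq 1/2$ from $A$. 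Nonseparability then follows not by a single application of the largeness proposition but by iterating this $\aleph_1$ times along an increasing chain of separable subalgebras $A_\gamma\supseteq A$, producing an uncountable $1/2$-separated subset of $A'\cap C$. So your outline can be repaired, but only by importing the central-sequence input that your proposal currently lacks.
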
 
 
 \begin{proof} 
Let $a_n$, for $n\in \bbN$, enumerate a countable dense subset of the unit ball of $A$. 
The relative commutant type over $A$, $\bt_{rc}$,  consists of all formulas of the form 
\begin{enumerate}
\item [] $\|a_n x - x a_n\|=0$, for $n\in \bbN$. 
\end{enumerate} 
If $\bt$ is a finitely approximately finitely satisfiable $\Phi$-type over $A'\cap C$ then $\bt\cup \bt_{rc}$
is a finitely approximately satisfiable $\Phi$-type over $C$. Also, an element $c$ of $C$ realizes
$\bt\cup \bt_{rc}$ if and only if $c\in A'\cap C$ and $c$ realizes $\bt$. 
Since $\bt$ was an arbitrary $\Phi$-type, countable $\Phi$-saturation of $A'\cap C$ 
follows. 

Now assume $C$ is infinite-dimensional. 
By enlarging $A$ if necessary, we can assume it is infinite-dimensional. 
 Expand $\bt_{rc}$ by 
adding all formulas of the form 
\begin{enumerate}
\item [(ii)] $\|a_n x - a_n\|\geq 1/2$. 
\end{enumerate}
We denote  the resulting type by $\bt$.
We shall prove that $\bt$ is finitely approximately realizable. 
This follows from the proof of 
 \cite[Lemma~5.2]{FaHaSh:Model1} 
and we refer the reader to this paper for details. 
First, if $A$ is a continuous trace, infinite-dimensional 
 algebra then its center $\cZ(A)$ is infinite-dimensional. 
 Therefore $\cZ(A)$ includes a sequence of contractions $f_n$, for $n\in \bbN$, 
 such that $\|f_m - f_n\|=1$ if $m\neq n$ (this is a consequence of Gelfand--Naimark 
 theorem, see e.g., the proof of  \cite[Lemma~5.4]{FaHaSh:Model1}), and therefore 
  $\bt$ is approximately finitely satisfiable by $f_m$'s. 

If $A$ is not a continuous trace algebra, then by \cite[Theorem 2.4]{AkPed}
it has a nontrivial central sequence. Elements of such a sequence 
witness that $\bt$ is finitely approximately realizable. 

By countable saturation, $\bt$ is realized in $C$. 
A realization of $\bt$ in $C$ is at a distance $\geq 1/2$ from $A$, and therefore we have proved
that $A'\cap C\not\subseteq A$. 

Now assume $A$ is a separable, not necessarily infinite-dimensional, subalgebra of 
$C$. Since $C$ is infinite-dimensional, find infinite-dimensional $A_0$ such that 
$A\subseteq A_0\subseteq C$. By using the above, build an increasing chain 
of separable subalgebras of $C$, $A_\gamma$, for $\gamma<\aleph_1$, such that
$A_\gamma'\cap A_{\gamma+1}$ is nontrivial for all $\gamma$. This shows that $A'\cap C$
intersects $A_{\gamma+1}\setminus A_\gamma$ for all $\gamma$, and it is therefore 
nonseparable. 
 \end{proof}

\subsection{Properties of countably degree-1 saturated C*-algebras} 
In the following there is a clear analogy with the theory of gaps in $\cP(\bbN)/\Fin$. 
\begin{definition}\label{D.gap}
 Two subalgebras $A,B$ of an algebra $C$  are \emph{orthogonal} if $ab=0$ for all $a\in A$ 
and $b\in B$. They are \emph{separated} if there is a positive element $c\in C$ such that 
$cac=a$ for all $a\in A$ and $cb=0$ for all $b\in B$. 
\end{definition}

A C*-algebra $C$ has AA-CRISP (asymptotically abelian, countable Riesz separation property) 
if the following holds: Assume $a_n, b_n$, for $n\in \bbN$, 
are positive elements of $C$ such that 
\[
a_n\leq a_{n+1}\leq b_{n+1}\leq b_n
\]
for all $n$. Furthermore assume $D$ is a separable subset of $C$ such that 
for every $d\in D$ we have 
\[
\lim_n \|[a_n,d]\|=0.
\] 
Then there exists a positive $c\in C$ such that $a_n\leq c\leq b_n$ for all $n$
and $[c,d]=0$ for all $d\in D$. 

By Theorem~\ref{T1} the following is a strengthening of the result that 
 every corona of a $\sigma$-unital C*-algebra has AA-CRISP  
(\cite[Corollary~6.7]{Pede:Corona}).

\begin{prop}\label{P.AA} Every countably degree-1 saturated C*-algebra $C$ has AA-CRISP. 
\end{prop}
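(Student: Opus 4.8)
The plan is to reformulate the conclusion as a single degree-1 $1$-type and then apply countable degree-1 saturation, using Lemma~\ref{L.sa} to secure positivity of the realization. Recall from \S\ref{S.App} that for positive elements the relation $a\leq c$ abbreviates $ac=a$; since $a$ and $c$ are self-adjoint this is the condition $\|ac-a\|=0$, which is a degree-1 expression in the variable standing for $c$. Thus $a_n\leq c$, $c\leq b_n$ and $[c,d]=0$ translate into the degree-1 conditions $\|a_n x-a_n\|=0$, $\|x b_n-x\|=0$ and $\|x d-d x\|=0$ respectively. Fix a countable dense subset $\{d_k:k\in\bbN\}$ of the separable set $D$; as $d\mapsto[c,d]$ is norm-continuous, commuting with every $d_k$ is equivalent to commuting with every $d\in D$. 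We may also assume that all the $a_n$ and $b_n$ are contractions.

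Let $\bt$ be the $1$-type, over the countable set $\{a_n,b_n,d_k\}$, consisting of $\|a_n x-a_n\|=0$ and $\|x b_n-x\|=0$ for all $n$ together with $\|x d_k-d_k x\|=0$ for all $k$. A realization of $\bt$ by a positive contraction is precisely a positive $c$ with $a_n\leq c\leq b_n$ for all $n$ and $[c,d]=0$ for all $d\in D$, so it suffices to prove $\bt$ is approximately finitely satisfiable by positive elements and then invoke Lemma~\ref{L.sa}. Fix $m$ and $\e>0$; I claim that a single term $a_M$ of the given sequence, with $M\geq m$ chosen large, $\e$-realizes all conditions of index $\leq m$. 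For $n\leq m\leq M$ transitivity of the increasing relation gives $a_n\leq a_M$, i.e.\ $a_n a_M=a_n$ exactly. For $n\leq m$ the nesting yields $a_M\leq b_M\leq b_n$, and transitivity ($a_M b_M=a_M$ and $b_M b_n=b_M$ give $a_M b_n=a_M b_M b_n=a_M b_M=a_M$) shows $a_M b_n=a_M$ exactly. Finally, by the asymptotic abelianness hypothesis $\lim_M\|[a_M,d_k]\|=0$ for each fixed $k$, so enlarging $M$ makes $\|[a_M,d_k]\|\leq\e$ for all $k\leq m$ simultaneously. Hence the positive contraction $a_M$ $\e$-realizes the finite subtype.

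Since $\bt$ is a countable degree-1 type over a countable set and is approximately finitely satisfiable by positive elements, Lemma~\ref{L.sa} (applied with $\Phi$ the class of all degree-1 $*$-polynomials, for which $C$ is countably $\Phi$-saturated) produces a positive $c\in C$ realizing $\bt$; as observed, this $c$ is the required interpolant commuting with $D$. The \emph{main obstacle} is the consistency step: its content is the observation that one and the same tail term $a_M$ approximately witnesses every finite fragment of $\bt$ at once, which is exactly what the monotonicity of $(a_n)$, the reverse monotonicity and nesting of $(b_n)$, and the asymptotic commutation with $D$ provide. Everything else is a formal application of the saturation machinery.
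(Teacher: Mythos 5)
Your overall strategy---encode the interpolation and commutation requirements as a countable degree-1 type over a countable set, witness every finite fragment by a single tail term $a_M$, and realize the type by a positive contraction---is exactly the paper's proof. The one substantive problem is your rendering of the order relation. In the hypothesis of AA-CRISP, $a_n\leq a_{n+1}\leq b_{n+1}\leq b_n$ is the genuine order on positive elements (this is what makes the proposition a strengthening of \cite[Corollary~6.7]{Pede:Corona}), and for positive contractions $a\leq c$ does \emph{not} imply $ac=a$: take $a_n=a_M=a$ with $a$ a positive contraction that is not a projection, so $a_na_M=a^2\neq a_n$. Hence the step ``transitivity gives $a_n\leq a_M$, i.e.\ $a_na_M=a_n$ exactly'' is false, and your type $\{\|a_nx-a_n\|=0,\ \|xb_n-x\|=0,\dots\}$ is not shown to be (and in general is not) approximately finitely satisfiable; it is in fact strictly stronger than what AA-CRISP asks for, since a realization $c$ with $a_nc=a_n$ must dominate the support projection of each $a_n$. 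The shortcut ``$a\leq b$ for $ab=a$'' introduced in \S\ref{S.App} is a notational device for writing certain conditions in types (it is the right one for SAW*, where one works with approximate units), not a reformulation of the C*-order, so it cannot be substituted into the hypothesis of AA-CRISP.

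The repair stays entirely within your framework. After scaling so that $\|b_1\|=1$ (hence all $a_n,b_n$ are positive contractions), express ``$a_n\leq x$'' by the degree-1 conditions $\|x-x^*\|=0$ and $\|(x-a_n)-1\|\leq 1$: for a self-adjoint $y$ with $\|y\|\leq 2$ one has $y\geq 0$ if and only if $\|y-1\|\leq 1$. Similarly express ``$x\leq b_n$'' by $\|(b_n-x)-1\|\leq 1$. These conditions are satisfied \emph{exactly} by $x=a_M$ for every $M\geq n$, directly from the monotonicity and nesting hypotheses, while asymptotic commutation handles the conditions $\|[d_k,x]\|=0$ up to $\e$; so the modified type is consistent. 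A realization $c$ in the unit ball is self-adjoint with $c-a_n\geq 0$ and $b_n-c\geq 0$, hence positive (since $c\geq a_1\geq 0$), and satisfies $a_n\leq c\leq b_n$ and $[c,d]=0$ for all $d\in D$; one does not even need Lemma~\ref{L.sa} here. With this change your argument coincides with the paper's.
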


\begin{proof} By scaling, we may assume that  $\|b_1\|=1$. 
Fix a countable dense subset $\{d_n\}$ of $D$ and 
let $\bt$ be the type consisting of the following conditions: 
$a_n\leq x$, $x\leq b_n$ and $[d_n,x]=0$, for all $n\in \bbN$. 
If $\bt_0$ is any finite subset of $\bt$ and $\e>0$,
 then for a large enough $n$ 
we have that  $a_n$ $\e$-approximately realizes $\bt_0$. 
By countable saturation of $C$, some $c\in C$ realizes $\bt$. 
This $c$ satisfies the requirements of the AA-CRISP for $a_n, b_n$ and $D$. 
\end{proof} 

Recall that a C*-algebra
 $C$ is an SAW*-algebra if any two  $\sigma$-unital subalgebras $A$ and $B$ 
of $C$ are orthogonal if and only if they are separated. 
By Theorem~\ref{T1} the following is a strengthening of the result that 
 every corona of a $\sigma$-unital C*-algebra is an SAW*-algebra 
(\cite[Corollary~7.5]{Pede:Corona}). (By \cite[Theorem~7.4]{Pede:Corona}, CRISP 
implies SAW* but we include a simple direct proof below.) 

\begin{prop} \label{P.SAW} 
Every countably degree-1 saturated C*-algebra $C$ is an SAW*-algebra. 
\end{prop}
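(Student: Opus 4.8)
The plan is to establish both directions of the SAW* equivalence for the two given $\sigma$-unital subalgebras $A$ and $B$ of $C$, with the forward implication being a one-line computation and the reverse implication carrying all the content. If $A$ and $B$ are separated, witnessed by a positive $c$ with $cac=a$ for all $a\in A$ and $cb=0$ for all $b\in B$, then for any $a\in A$ and $b\in B$ we have $ab=(cac)b=ca(cb)=0$, so $A$ and $B$ are orthogonal. Hence it suffices to produce, from orthogonality, a positive element of $C$ separating $A$ from $B$.

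For this reverse direction the main obstacle is that the defining relation $cac=a$ is a \emph{degree-2} condition in the unknown $c$, so it cannot be fed directly into a degree-1 type. The key observation that removes this obstacle is that a positive contraction $c$ which acts as a left unit on $A$ and annihilates $B$ already separates them: if $ca=a$ for every $a\in A$ then, since $c=c^*$ and $A=A^*$, applying the adjoint to $ca^*=a^*$ yields $ac=a$, whence $cac=c(ac)=ca=a$; the requirement $cb=0$ for all $b\in B$ is imposed outright. Both families of demands on $c$ are visibly degree-1.

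Concretely, I would fix countable dense subsets $\{a_n\}$ of $A$ and $\{b_n\}$ of $B$ and let $\bt$ be the type consisting of the degree-1 conditions $\|x a_n-a_n\|=0$ and $\|x b_n\|=0$ for all $n$. The next step is to check that $\bt$ is approximately finitely satisfiable by positive contractions, which is exactly where $\sigma$-unitality and orthogonality enter. Let $(e_k)$ be a sequential approximate unit for $A$ consisting of positive contractions. Given a finite subtype mentioning $a_1,\dots,a_m$ and $b_1,\dots,b_m$ and any $\e>0$, choose $k$ large enough that $\|e_k a_i-a_i\|<\e$ for $i\le m$; since $e_k\in A$ and $A,B$ are orthogonal we have $e_k b_j=0$ exactly for every $j$, so $e_k$ $\e$-realizes the subtype. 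Thus $\bt$ is consistent, and by Lemma~\ref{L.sa} applied with spectrum in $[0,1]$ it is realized by a positive contraction $c\in C$.

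Finally I would verify that this $c$ separates $A$ and $B$: from $\|c a_n-a_n\|=0$ and density we get $ca=a$ for all $a\in A$, hence $cac=a$ by the computation above, while $\|c b_n\|=0$ and density give $cb=0$ for all $b\in B$. Therefore $A$ and $B$ are separated, completing the nontrivial direction. The one point requiring care is that the conditions $\|x a_n-a_n\|=0$ treat $x a_n$ as a genuine degree-1 *-polynomial, which presupposes a unit for $C$; this is automatic for the corona and for the other algebras of Theorem~\ref{T0}, and in the general statement one works in the unitization of $C$.
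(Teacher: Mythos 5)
Your proposal is correct and follows essentially the same route as the paper: a degree-1 type asserting that $x$ is a one-sided unit on a dense (or approximately unital) subset of $A$ and annihilates $B$, shown consistent via a positive approximate unit of $A$ together with orthogonality, realized by a positive contraction, with self-adjointness upgrading $ca=a$ to $cac=a$. The only differences are cosmetic (you invoke Lemma~\ref{L.sa} to get positivity where the paper adds $x=x^*$ to the type and passes to $|c|$, and you spell out the trivial converse direction), so no further comment is needed.
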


\begin{proof} Assume $A$ and $B$ are $\sigma$-unital subalgebras of $C$
such that $ab=0$ for all $a\in A$ and all $b\in B$. 
Let $a_n$, for $n\in \bbN$ and $b_n$, for $n\in \bbN$, 
be an approximate identity of $A$ and $B$, 
respectively. 
 Consider type $\bt_{AB}$ consisting of the following expressions, for all $n$. 
\begin{enumerate}
\item [(i)] $a_n x=a_n$, 
\item [(ii)] $xb_n=0$
\item [(iii)] $x=x^*$. 
\end{enumerate}
Every finite subset of $\bt_{AB}$ is $\e$-realized by $a_n$ for a large enough $n$. 
If $c$ realizes $\bt_{AB}$, then $ac=a$ for all $a\in A$ and $cb=0$ for all $b\in B$. 
Moreover, $c$ is self-adjoint by (iii) and $|c|$ still satisfies the above. 
\end{proof}

Assume $B$,  $C$ and $D$ are subalgebras of a C*-algebra $M$. 
 We say that $D$ \emph{derives} 
$B$ if for every $d\in D$ the derivation $\delta_d(x)=dx-xd$ maps $B$ into itself. 
The following is an extension of 
 Higson's formulation of Kasparov's Technical Theorem (\cite{Hig:Technical}, 
 also  \cite[Theorem~8.1]{Pede:Corona}).

We say that a C*-algebra $M$ has  KTT if the following holds: 
Assume $A,B$, and $C$ are subalgebras of $M$ such that $A\perp B$
and $C$ derives $B$.  Furthermore assume $A$ and $B$ are $\sigma$-unital and 
$C$ is separable. Then there is a positive element $d\in M$ such that 
$d\in C'\cap M$, the map $x\mapsto xd$ is the identity on $B$, and the 
map $x\mapsto dx$ annihilates $A$.

\begin{prop} \label{P.KTT} Every   countably degree-1 saturated C*-algebra  
has KTT. 
\end{prop}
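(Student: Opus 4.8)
The plan is to recast the conclusion as the realization of a single countable degree-1 type and then to feed the required consistency into the saturation hypothesis, exactly as in the proofs of Propositions~\ref{P.SAW} and~\ref{P.AA}. Fix a sequential approximate identity $(a_n)$ of $A$, a sequential approximate identity $(b_n)$ of $B$, and a sequence $(c_n)$ dense in $C$. Let $\bt$ be the type consisting, for every $n\in\bbN$, of the three conditions
\[
\text{(i)}\ c_nx-xc_n=0,\qquad \text{(ii)}\ b_nx=b_n,\qquad \text{(iii)}\ xa_n=0 .
\]
These are degree-1 $*$-polynomial conditions with coefficients in $M$, and the type is countable. First I would check that any realization $d$ of $\bt$ witnesses KTT. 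Condition (i), the density of $(c_n)$, and the continuity of $c\mapsto cd-dc$ give $d\in C'\cap M$. From (ii) we get $b_nd=b_n$, so for $b\in B$, using $bb_n\to b$, $bd=\lim_n(bb_n)d=\lim_n b(b_nd)=\lim_n bb_n=b$; thus $x\mapsto xd$ is the identity on $B$. From (iii) we get $da_n=0$, so for $a\in A$, using $a_na\to a$, $da=\lim_n d(a_na)=\lim_n(da_n)a=0$; thus $x\mapsto dx$ annihilates $A$. Positivity of $d$ will be obtained by applying the spectrum form of Lemma~\ref{L.sa}, once $\bt$ is shown to be approximately finitely satisfiable by positive contractions.

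Next I would establish that consistency. The witnesses will be the members of an approximate identity $(e_k)$ of $B$ that is \emph{quasicentral relative to $C$}: a positive, increasing approximate identity of $B$ with $\|c_ne_k-e_kc_n\|\to 0$ for every $n$. Granting such $(e_k)$, fix a finite subset of $\bt$ (involving indices $\leq m$) and $\e>0$, and take $k$ large. Condition (i) is $\e$-satisfied because $\|c_ie_k-e_kc_i\|\to 0$; condition (ii) is $\e$-satisfied because $b_ie_k\to b_i$; and condition (iii) is satisfied exactly, since $e_k\in B$, $a_i\in A$, and $A\perp B$ give $a_ie_k=0$, whence $e_ka_i=(a_ie_k)^*=0$ (both elements being positive, hence self-adjoint). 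As each $e_k$ is a positive contraction, $\bt$ is approximately finitely satisfiable by positive contractions, and Lemma~\ref{L.sa} yields the desired positive realization $d$.

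The one nontrivial ingredient, and the only place where the hypotheses ``$C$ derives $B$'' and ``$B$ is $\sigma$-unital, $C$ separable'' are actually used, is the existence of this quasicentral approximate identity; this is the step I expect to carry all the analytic weight. Because $C$ derives $B$, each $\delta_{c_n}$ restricts to a bounded derivation of $B$ into itself. Passing to second duals, $\delta_{c_n}^{**}$ is a normal derivation of $B^{**}$ with $\delta_{c_n}^{**}(1)=0$; since any approximate identity $(u_j)$ of $B$ converges $\sigma$-weakly to $1$, normality gives $\delta_{c_n}(u_j)\to 0$ in the weak topology of $B$. Hence $0$ lies in the norm-closed convex hull of $\{\delta_{c_n}(u_j)\}_j$ by Mazur's theorem, so suitable convex combinations of the $u_j$ — which remain positive contractions and, for large indices, approximate identity elements — are asymptotically annihilated by $\delta_{c_n}$. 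A finite-family version of this averaging, combined with a diagonal argument using the $\sigma$-unitality of $B$, produces a single sequential approximate identity $(e_k)$ of $B$ that is quasicentral relative to all of $C$ (the norm bound on the $e_k$ upgrading quasicentrality from the dense set $(c_n)$ to $C$). This is precisely the analytic core on which the classical proof of Kasparov's Technical Theorem rests; once it is in hand, countable degree-1 saturation does the remaining work for free.
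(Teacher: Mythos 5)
Your proof is correct and follows the same strategy as the paper's: encode the three required properties of $d$ as a countable degree-1 type, witness its consistency by a $C$-quasicentral approximate unit of $B$, and invoke saturation (the paper adds the condition $x=x^*$ and takes $|d_0|$ at the end, where you instead invoke the spectrum form of Lemma~\ref{L.sa}; both devices work). The only real divergence is in how the quasicentral approximate unit is produced. The paper fixes a strictly positive $b\in B$, quotes Pedersen's computation that the commutators $[b^{1/n},c]$ converge to $0$ strictly (hence weakly, these commutators lying in $B$ because $C$ derives $B$), and then applies Hahn--Banach to pass to a norm-quasicentral approximate unit inside the convex hull of $\{b^{1/n}\}$. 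You instead argue abstractly that $\delta_c^{**}$ is a normal derivation of $B^{**}$ annihilating the unit, so $\delta_c(u_j)\to 0$ weakly for \emph{any} approximate identity $(u_j)$, and then apply Mazur. The two routes establish the same weak convergence and finish with the same convex-combination and diagonalization step, so this is a presentational variant rather than a new idea; your version has the minor advantage of not depending on the particular approximate unit $(b^{1/n})$, at the cost of invoking the second dual. All the remaining verifications (exact satisfaction of $xa_n=0$ by orthogonality, upgrading commutation from the dense set $\{c_n\}$ to $C$, and passing from the approximate-identity conditions to the conclusions on all of $A$ and $B$) are carried out correctly.
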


\begin{proof} Assume $A,B$ and $C$ are as above. 
Since $B$ is $\sigma$-unital we can 
fix a strictly positive element $b\in B$. Then $b^{1/n}$, for $n\in \bbN$, is an approximate
unit for $B$. An easy computation demonstrates 
that for every $c\in C$ the commutators $[b^{1/n}, c]$ strictly converge to $0$ (see 
the first paragraph of the proof of Theorem 8.1 in 
 \cite{Pede:Corona}). They therefore converge to 0 weakly. 
 The Hahn--Banach theorem combined with the separability of $C$ 
 now shows that one can extract an approximate unit $(e_m)$ 
 for $B$ in the convex closure  of $\{b^{1/n} : n\in \bbN\}$ such that 
 the commutators $[e_m,c]$ norm-converge to 0 for every $c\in C$.

  In other words,   $B$ has an approximate unit $(e_m)$
which is 
$C$-quasicentral. 
 Fix a countable approximate unit $(f_n)$ of $A$ and a countable dense subset $\{c_m\}$ of $C$.  
Consider the type $\bt$ consisting of the following conditions, for all $m$ and all $n$. 
\begin{align*}
\| e_n x-e_n\|&=0\\
\|xf_n\|&=0\\
\| [c_m, x]\|&=0\\ 
\|x-x^*\|&=0. 
\end{align*}
For every finite subset $F$ of this type and every $\e>0$ there exists an $m$ large enough so 
that all the conditions in $F$ are $\e$-satisfied with $x=e_m$. Therefore the type $\bt$ is consistent
and by countable degree-1 saturation it is satisfied by some $d_0$. Then  $d=|d_0|$ is as required. 
\end{proof}

A C*-algebra $M$ is \emph{sub-Stonean} if for all $b$ and $c$ in $M$ such that 
$bc=0$ there are positive contractions $f$ and $g$ such that $bf=b$, $gc=c$ and $fg=0$. 
By considering $B=C^*(b)$ and $C=C^*(c)$ and noting that $B$ and $C$ are
orthogonal, one easily sees that every SAW* algebra is sub-Stonean. 
The following strengthening  was introduced by Kirchberg  \cite{Kirc:Central}. 

\begin{definition} \label{D.ssS} 
A C*-algebra $C$ is \emph{$\sigma$-sub-Stonean} if for every separable subalgebra $A$ of $C$ and all positive $b$ and $c$ in $C$
such that $bAc=\{0\}$ there are contractions $f$ and $g$ in $A'\cap C$
such that $fg=0$, $fb=b$ and $gc=c$. 
\end{definition} 

The fact that for a separable C*-algebra $A$ the relative commutant of $A$ in 
its ultrapower associated with a nonprincipal ultrafilter on $\bbN$ (as well as the 
related algebra $F(A)=(A'\cap A^{\cU})/\Ann(A,A^{\cU})$, see \cite{Kirc:Central}) 
is  $\sigma$-sub Stonean was used in \cite{Kirc:Central} to deduce many other properties
of the relative commutant. Several proofs in  \cite{Kirc:Central}, in particular the ones in the appendix, 
can easily be recast in the language of logic for metric structures. 

Before we strengthen Kirchberg's result by proving 
countably degree-1 saturated algebras  are  $\sigma$-sub-Stonean
(Proposition~\ref{P.ssS}) we prove a lemma. 

\begin{lemma}\label{L.ideal} Assume $M$ is countably degree-1 saturated and 
$B$ is a separable subalgebra. If $I$ is a (closed, two-sided)  ideal of $B$ 
then there is a contraction $f\in M\cap B'$ such that $af=a$ for all $a\in I$. 

If moreover $c\in M$ is such that $Ic=\{0\}$, then we can choose $f$ so that 
$fc=0$ and $fIc=\{0\}$. 
\end{lemma}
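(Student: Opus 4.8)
The plan is to realize $f$ as a solution of a one-variable degree-1 type, exactly in the style of the proofs of Propositions~\ref{P.SAW} and~\ref{P.KTT}. The single analytic ingredient I would import is the existence of a quasicentral approximate unit: since $I$ is a (separable) closed two-sided ideal of the separable algebra $B$, there is an approximate unit $(e_n)$ for $I$ consisting of positive contractions that is quasicentral relative to $B$, i.e. $\lim_n\|[b,e_n]\|=0$ for every $b\in B$. As $B$ is separable it suffices to arrange this along a countable dense subset of $B$ and then use that the $e_n$ are contractions.

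Next I would fix countable dense subsets $\{a_k\}$ of the unit ball of $I$ and $\{b_k\}$ of the unit ball of $B$, and form the type $\bt$ in a single variable $x$ consisting of the conditions
\[
\|[b_k,x]\|=0,\qquad \|a_k x-a_k\|=0,\qquad k\in\bbN.
\]
Each of these is a degree-1 condition: for instance $a_k x-a_k$ is the degree-1 *-polynomial $a_k\cdot x\cdot 1-a_k$, and $[b_k,x]=b_k x\cdot 1-1\cdot x\cdot b_k$. For any finite subset and any $\e>0$, the element $e_m$ $\e$-realizes it once $m$ is large: the commutator conditions hold by quasicentrality, and $\|a_k e_m-a_k\|\to 0$ because $(e_m)$ is an approximate unit for $I$ and $a_k\in I$. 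Thus $\bt$ is approximately finitely satisfiable, witnessed throughout by positive contractions $e_m$ whose spectra lie in $[0,1]$. By the ``moreover'' clause of Lemma~\ref{L.sa} (with $[r,s]=[0,1]$) the type is realized by a positive contraction $f$. The commutator conditions together with density give $f\in M\cap B'$, and $a_k f=a_k$ for all $k$ gives $af=a$ for all $a\in I$ by density.

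For the additional clause I would simply append the condition $\|xc\|=0$ to $\bt$. Since every $e_m$ lies in $I$ and $Ic=\{0\}$, we have $e_m c=0$ exactly, so the enlarged type is still approximately finitely satisfiable by the same $(e_m)$, and Lemma~\ref{L.sa} again produces a positive contraction $f$ now also satisfying $fc=0$. The remaining assertion $fIc=\{0\}$ is then automatic: for $a\in I$ we have $ac=0$, hence $fac=0$.

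I do not expect a genuine obstacle here. The only nontrivial point is isolating the right consistency witness, and the quasicentral approximate unit of $I$ in $B$ is precisely designed to make both the commutator conditions and the ``$af=a$'' conditions approximately hold simultaneously along the same sequence; everything else follows the established template of extracting a realization from countable degree-1 saturation.
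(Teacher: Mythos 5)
Your proposal is correct and follows essentially the same route as the paper: an Arveson $B$-quasicentral approximate unit for $I$ witnesses consistency of the degree-1 type $\{\|[b_k,x]\|=0,\ \|a_kx-a_k\|=0,\ \|xc\|=0\}$, which is then realized by a positive contraction via the ``moreover'' clause of Lemma~\ref{L.sa}. The only (harmless) deviation is that you observe $fIc=\{0\}$ is automatic from $Ic=\{0\}$, whereas the paper also builds the conditions $xa_nc=0$ into the type.
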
 

\begin{proof} 
Fix a countable dense subset $a_n$, for $n\in \bbN$, of $I$ and
a countable dense subset $b_n$, for $n\in \bbN$, on $B$. Consider type $\bt$
consisting of the following conditions. 
\begin{enumerate}
\item $\|a_n x-a_n\|=0$ for all $n\in \bbN$, 
\item $\|b_n x-x b_n\|=0$ for all $n\in \bbN$. 
\item $xc=0$, and
\item $xa_n c=0$ for all $n\in \bbN$.  
\end{enumerate}
We prove that $\bt$ is consistent, and moreover that it is finitely approximately satisfiable
by a contraction. By \cite{Arv:Notes} $I$ has a $B$-quasicentral  approximate unit 
$e_n$, for $n\in \bbN$, consisting of positive elements. Since $Bc=\{0\}$ 
we have $e_n c=0$, as well as $e_n a_m c=0$  for all $m$ and all 
$n$. 
Therefore every finite fragment of 
$\bt$ is arbitrarily well approximately satisfiable by  $e_n$ for all large enough $n$. 
By Lemma~\ref{L.sa} (applied with $[r,s]=[0,1]$) and saturation of $M$ 
there is a contraction $f\in M$ that realizes $\bt$.  
Then $fa=a$ for all $a\in I$,  $f\in B'\cap M$, $fAc=\{0\}$,  and $fc=0$, as required. 
\end{proof} 

\begin{prop} \label{P.ssS} 
Every countably degree-1 saturated C*-algebra is $\sigma$-sub-Stonean. 
\end{prop}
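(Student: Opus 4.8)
The plan is to reduce the problem to producing a single positive contraction $h\in A'\cap M$ that \emph{separates} $b$ and $c$, in the sense that $hb=b$ and $hc=0$, and then to manufacture the orthogonal pair $(f,g)$ by applying continuous functional calculus to $h$. Routing everything through one element is essentially forced: the relation $fg=0$ has degree $2$, so it cannot be imposed directly in a degree-$1$ type, whereas orthogonality becomes automatic once $f$ and $g$ are realized as functions of a single self-adjoint element with disjoint supports.

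Granting such an $h$, I would fix continuous functions $\phi,\psi\colon[0,1]\to[0,1]$ with $\phi(1)=1$, $\psi(0)=1$ and $\phi\psi\equiv 0$ (for instance $\phi(t)=\max(0,2t-1)$ and $\psi(t)=\max(0,1-2t)$), and set $f=\phi(h)$ and $g=\psi(h)$. Since $h\in A'\cap M$, both $f$ and $g$ lie in $A'\cap M$; they are positive contractions because $0\le\phi,\psi\le 1$; and $fg=(\phi\psi)(h)=0$. From $hb=b$ one gets $h^nb=b$ for all $n$, hence $p(h)b=p(1)b$ for every polynomial $p$ and, by approximation, $fb=\phi(h)b=\phi(1)b=b$; symmetrically $hc=0$ gives $gc=\psi(h)c=\psi(0)c=c$. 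Thus $f$ and $g$ meet every requirement of Definition~\ref{D.ssS}.

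It remains to produce $h$, and here I would invoke Lemma~\ref{L.ideal}. Set $B=C^*(A,b)$, a separable subalgebra of $M$, and let $I$ be the closed two-sided ideal of $B$ generated by $b$, so that $b\in I$. The one genuine point is to check that $Ic=\{0\}$. Since $1\in A$, for every word $w$ in $A\cup\{b\}$ we have $bwc=0$, by induction on the number of occurrences of $b$ in $w$: if $w$ contains no $b$ then $w\in A$ and $bwc\in bAc=\{0\}$, while otherwise $w=ubv$ with $v\in A$ (as $A\ni 1$), so $bvc\in bAc=\{0\}$ and $bwc=bu\,(bvc)=0$. By linearity and density this yields $b\beta c=0$ for all $\beta\in B$, whence $Ic=\overline{BbB}\,c=\{0\}$. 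Lemma~\ref{L.ideal} (whose proof, through Lemma~\ref{L.sa}, produces a \emph{positive} contraction) now provides $h\in M\cap B'$ with $ah=a$ for all $a\in I$ and with $hc=0$. As $A\subseteq B$ we have $h\in A'\cap M$, and since $h$ commutes with $b\in I$, the relation $bh=b$ upgrades to $hb=bh=b$. This $h$ has exactly the properties used above.

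The only real obstacle is the ideal computation $Ic=\{0\}$: this is the step that consumes the separating hypothesis $bAc=\{0\}$ (and, via the unit, the orthogonality $bc=0$ that is in any case necessary for the conclusion). Once it is in place, the remainder is a routine functional-calculus verification together with a direct appeal to Lemma~\ref{L.ideal}.
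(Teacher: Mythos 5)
Your proof is correct, and its first half coincides with the paper's: both produce the element fixing $b$ and annihilating $c$ by applying Lemma~\ref{L.ideal} to the closed ideal generated by $b$ in $C^*(A,b)$, and your verification that $Ic=\{0\}$ is exactly the computation the paper leaves implicit. (Note that this computation, in either version, genuinely needs $bc=0$; Definition~\ref{D.ssS} as printed omits that hypothesis, though it is part of Kirchberg's original definition, and your assumption $1\in A$ is just one way of restoring it.) Where you diverge is in producing $g$: the paper applies Lemma~\ref{L.ideal} a second time, to the ideal generated by $c$ in $C^*(A,c)$ with left and right sides switched and with $c$ replaced by the already constructed $f$, so that the orthogonality $fg=0$ is imposed as the degree-1 condition $xf=0$ on the new variable. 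This shows that your opening claim --- that $fg=0$ is a degree-2 relation and one is therefore ``essentially forced'' to route everything through a single element --- is not right: once one of the two elements is fixed, orthogonality to it becomes a degree-1 condition in the remaining variable, and that is precisely how the paper proceeds. Your alternative, splitting a single positive contraction $h$ by functional calculus with $\phi\psi\equiv 0$, is perfectly valid and trades the second saturation argument for a Weierstrass approximation; its one small cost is that $\psi(0)=1$, so $g=\psi(h)$ a priori lives in the unitization of $C$ --- harmless in all of the paper's applications, where $C$ is unital, whereas the paper's second application of Lemma~\ref{L.ideal} yields $g\in C$ directly.
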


\begin{proof} Fix $A,b$ and $c$ as in Definition~\ref{D.ssS}. 
By applying Lemma~\ref{L.ideal} find a contraction $f\in M\cap A'$ such that 
 $bf=b$, $fc=0$ and $fAc=\{0\}$. 
Now let $C=C^*(A,c)$ and let $J$ be the ideal of $C$ generated by $c$. 
By applying Lemma~\ref{L.ideal} again (with left and right sides switched) 
with $c$ replaced by $f$ we find a contraction $g\in M\cap A'$ such that 
$fg=0$, and $gc=c$. 
\end{proof}

By Theorem~\ref{T1} the following is a strengthening of the result that 
 every derivation of a separable subalgebra of the corona of a 
   $\sigma$-unital C*-algebra is inner (\cite[Theorem~10.1]{Pede:Corona}). 

\begin{prop} \label{P.derivation} 
Assume $C$ is a 
countably degree-1 saturated C*-algebra and $B$ is a separable subalgebra. 
Then every derivation $\delta$ of $B$ is of the form $\delta_c$ for some $c\in C$. 
\end{prop}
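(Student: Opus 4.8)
The plan is to realize the derivation via a type built from a suitable bounded norming of $\delta$, using countable degree-1 saturation. Recall that every derivation of a C*-algebra is automatically bounded; by rescaling we may assume $\|\delta\|\leq 1$. Fix a countable dense subset $\{b_n : n\in\bbN\}$ of the unit ball of $B$. The derivation $\delta_c$ implemented by an element $c\in C$ satisfies $\delta_c(b)=cb-bc$, so the natural type to write down is
\[
\bt=\{\, \|x b_n - b_n x - \delta(b_n)\|=0 : n\in \bbN \,\}.
\]
A realization $c$ of $\bt$ satisfies $cb_n-b_nc=\delta(b_n)$ on a dense set, and since both $c\mapsto [c,\,\cdot\,]$ and $\delta$ are norm-continuous, $\delta_c$ agrees with $\delta$ on all of $B$. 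Every condition is a degree-1 *-polynomial in the single variable $x$ (the $\delta(b_n)$ are fixed coefficients in $C$), so degree-1 saturation is exactly the right tool — provided I can establish that $\bt$ is consistent.

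First I would reduce to a norm bound that makes consistency tractable. The main obstacle is showing $\bt$ is approximately finitely satisfiable, and for this the cleanest route is to invoke the structure of derivations of separable C*-algebras inside the relevant ambient algebra. Concretely, a finite fragment of $\bt$ involves finitely many elements $b_{n_1},\dots,b_{n_k}$, and I must produce, for each $\e>0$, some $y\in C$ with $\|y b_{n_i} - b_{n_i} y - \delta(b_{n_i})\|<\e$ for all $i\leq k$. The key input is that $\delta$ restricted to the finite-dimensional (or, after the limiting argument, separable) data is approximately implemented: since $\delta$ is a bounded derivation of the separable algebra $B$, standard perturbation arguments (e.g. the derivation theorem of Sakai together with the fact that on any finitely generated piece the derivation is nearly inner in a larger algebra, or simply approximating $\delta$ on a finite-dimensional subspace) yield for each finite fragment an element $y$ of bounded norm witnessing approximate satisfiability. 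To control the norm uniformly I would use $\|\delta\|\leq 1$ to arrange that the witnesses lie in a fixed ball, so that Lemma~\ref{L.sa} can be applied if a self-adjoint realization is desired.

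Having established consistency, I would apply countable degree-1 saturation of $C$ to the type $\bt$ over the countable set $\{b_n\}\cup\{\delta(b_n)\}$ to obtain a single $c\in C$ realizing all conditions simultaneously. By density of $\{b_n\}$ in the unit ball and continuity, $\delta_c\restriction B=\delta$, completing the proof. The step I expect to be genuinely delicate is the consistency argument: one must be careful that the finite approximate witnesses respect the algebraic relations among the $b_n$ so that the approximate equalities $y b_{n_i}-b_{n_i}y\approx\delta(b_{n_i})$ can be met simultaneously for all $i\leq k$ with a common $y$. This is where I would lean on the known result (Pedersen, cited as \cite[Theorem~10.1]{Pede:Corona}) in the corona case as a template — or more self-containedly, on the fact that every derivation of a separable C*-algebra, viewed inside its multiplier or enveloping von Neumann algebra, is implemented by an element whose image can be approximated within $C$ on finite fragments. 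Once that approximate implementability on finite fragments is secured, the rest is a direct appeal to saturation.
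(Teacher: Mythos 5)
Your proposal is correct and follows essentially the same route as the paper: the identical type $\{\|xb_n-b_nx-\delta(b_n)\|=0\}$ over the countable set $\{b_n\}\cup\{\delta(b_n)\}$, realized by countable degree-1 saturation. The consistency step you flag as delicate is exactly the classical fact that every derivation of a C*-algebra is approximately inner, with witnesses in $B$ itself of uniformly bounded norm (\cite[8.6.12]{Pede:C*}), which the paper cites to dispatch that step in one line.
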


\begin{proof} Fix a countable dense subset $B_0$ of $B$. 
Consider type $\bt_{\delta}$ consisting of following conditions, 
for $b\in B_0$. 
\begin{enumerate}
\item [(i)]  $\| xb - bx-\delta(b)\|=0$. 
\end{enumerate}
By \cite[8.6.12]{Pede:C*}
this type is consistent and if $c$ realizes it then $\delta(b)=\delta_c(b)$ for all $b\in B$. 
\end{proof}

\subsection{Automorphisms}

In \cite{PhWe:Calkin} the authors proved that the Continuum Hypothesis implies that the
Calkin algebra has $2^{\aleph_1}$ outer automorphisms. Since $\kappa<2^\kappa$ for all cardinals
$\kappa$, this conclusion implies that Calkin algebra has outer automorphisms.  
A simpler proof of Phillips--Weaver result was given in 
\cite{Fa:All}. The proof of Theorem~\ref{T.auto}  
below is in the spirit of \cite{PhWe:Calkin}, but instead of 
results about KK-theory it uses countable quantifier-free saturation.

Recall that the \emph{character density} of a C*-algebra is the smallest cardinality of a dense subset. 
The following remark refers to the full countable saturation in logic for countable
structures, not considered in the 
present paper (cf. \cite{FaHaSh:Model2}). 
The standard back-and-forth method shows that a fully countably saturated 
C*-algebra of character density $\aleph_1$  has $2^{\aleph_1}$ automorphisms. 
Therefore, the Continuum Hypothesis implies that $M$ has $2^{\aleph_1}$ automorphisms
whenever $M$ is an ultrapower of a separable C*-algebra, a relative commutant of 
a separable C*-algebra in its ultrapower, or an algebra of the form $\prod_n A_n/\bigoplus_n A_n$
for a sequence of separable unital C*-algebras $A_n$, for $n\in \bbN$.
Since $\aleph_1$ is always less than $2^{\aleph_1}$, in this situation, the automorphism group is strictly larger than 
the group of inner automorphisms. 
These issues will be treated in an upcoming paper joint with David Sherman. In the following we show 
how to construct $2^{\aleph_1}$ automorphisms in a situation where algebra is only quantifier-free saturated.

\begin{theorem} \label{T.auto} 
If $C$ is a countably quantifier-free 
 saturated C*-algebra of 
 character density $\aleph_1$ whose center is separable
then $C$ has $2^{\aleph_1}$  automorphisms. 
\end{theorem}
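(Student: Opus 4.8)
The plan is to build a tree of automorphisms via a back-and-forth construction, where the key point is that quantifier-free saturation (rather than full saturation) suffices because we only need to extend partial isomorphisms by one element at a time, and the obstruction to extension is always a quantifier-free type. First I would fix an increasing continuous chain of separable subalgebras $C_\gamma$, for $\gamma<\aleph_1$, whose union is dense in $C$, with $C_0$ containing a dense subset of the separable center. Because the center is separable, every element commutes with the center modulo the ideal structure that is already captured in $C_0$, and this is what will let me make binary choices at each stage without the choices interfering. I would then attempt, for each $x\in 2^{\aleph_1}$ (viewed as a branch through a binary tree), to construct an automorphism $\alpha_x$ of $C$ as the closure of an increasing union of partial isomorphisms $\alpha_x\rs C_\gamma$.

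The heart of the argument is the inductive step. Suppose $\alpha\colon C_\gamma\to C$ is an isometric $*$-homomorphism that I wish to extend to $C_{\gamma+1}=C^*(C_\gamma, a)$ for some new generator $a$. The quantifier-free type of $a$ over $C_\gamma$ is some countable consistent type $\bt(x)$; applying $\alpha$ to its coefficients produces a countable quantifier-free type $\alpha(\bt)$ over $\alpha(C_\gamma)\subseteq C$. This pushed-forward type is consistent in $C$ because $\alpha$ is an isometry and each approximate-satisfiability witness for a finite fragment of $\bt$ maps to one for $\alpha(\bt)$. By countable quantifier-free saturation (Lemma~\ref{L.un} providing the self-adjoint/unitary refinements when $a$ is of special form), $\alpha(\bt)$ is realized by some $a'\in C$, and sending $a\mapsto a'$ extends $\alpha$ to $C_{\gamma+1}$. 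To get $2^{\aleph_1}$ distinct automorphisms, at a club of stages $\gamma$ I would arrange a genuine binary branching: I find two elements $a'_0, a'_1$ realizing $\alpha(\bt)$ with $\|a'_0-a'_1\|$ bounded below by a fixed constant, which is possible because a consistent quantifier-free type that is realized has, by the largeness/non-separability phenomena, many realizations at positive distance (one produces a second realization by adding a condition $\|x-a'_0\|\ge \delta$ and checking this enlarged type remains consistent).

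The main obstacle is ensuring that distinct branches $x\neq y$ in $2^{\aleph_1}$ actually yield \emph{non-isomorphic} witnesses, i.e. that the resulting automorphisms $\alpha_x$ are genuinely distinct, and moreover pairwise non-equal as automorphisms of $C$. It is not enough that the binary choices differ at some coordinate; I must guarantee that a difference made at stage $\gamma$ cannot be undone by later stages, so that $\alpha_x\neq\alpha_y$ in the limit. The standard device, and the one I expect to need here, is a \emph{coding} argument: I fix in advance a sequence of ``markers'' (for instance, a family of projections or spectral data in $C$ indexed by branching stages) and arrange the binary choice at stage $\gamma$ to determine an invariant of $\alpha_x$ at $\gamma$ that is preserved by all subsequent extensions. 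Verifying that such invariants exist and are stable under the back-and-forth steps is where the hypothesis on the separable center is essential, since it pins down the abelian part rigidly enough that the coded data cannot be scrambled.

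Finally, a bookkeeping step: to ensure each $\alpha_x$ is a genuine automorphism (surjective, not merely an embedding), I run the construction \emph{back and forth} rather than merely forth, alternately demanding that each $C_\gamma$ lie in the range of $\alpha_x\rs C_{\gamma'}$ for some later $\gamma'$; the same saturation argument applied to $\alpha^{-1}$ handles the range side. Counting the branches gives $2^{\aleph_1}$ automorphisms, and the coding shows the map $x\mapsto\alpha_x$ is injective, completing the proof.
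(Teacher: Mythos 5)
There is a genuine gap at the heart of your inductive step: the claim that the pushed-forward type $\alpha(\bt)$ is consistent ``because $\alpha$ is an isometry and each approximate-satisfiability witness for a finite fragment of $\bt$ maps to one for $\alpha(\bt)$.'' The witnesses for finite fragments of $\bt$ are elements of $C$, not of $C_\gamma$, so $\alpha$ is not defined on them and cannot map them anywhere. Consistency of a quantifier-free type over $C$ is a statement about the existence of approximate realizations \emph{in $C$}, and an isometric $*$-embedding of $C_\gamma$ gives no control over how $\alpha(C_\gamma)$ sits inside $C$; transferring consistency would require $\alpha$ to preserve formulas with at least one existential quantifier, i.e.\ to be (partially) elementary. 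Quantifier-free saturation alone does not support a general one-element-at-a-time back-and-forth, and this is precisely why the paper does not attempt one. Instead it works exclusively with \emph{approximately inner} partial isomorphisms: for those, the relevant type $\{\|xbx^*-\Phi(b)\|=0\}\cup\{xx^*=1,\ x^*x=1\}$ is consistent by the very definition of approximate innerness, and Lemma~\ref{L.auto.1} realizes it by a unitary. Branching is then produced not by finding two realizations of a type but by composing with $\Ad u$ for a non-central unitary $u\in A'\cap C$ supplied by Lemma~\ref{L.cta} --- this is where the separable-center hypothesis actually enters (it guarantees the nonseparable relative commutant is not contained in the center), rather than in the coding role you assign to it.

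Two smaller points. Your auxiliary claim that a realized type always admits a second realization at distance $\ge\delta$ is false in general (the quantifier-free type of a fixed element $z$ over a set containing $z$ contains $\|x-z\|=0$ and has a unique realization), so even granting the extension step your branching device needs repair; the paper's $\Phi\mapsto\Phi\circ\Ad u$ avoids this. On the other hand, your worry that a difference made at stage $\gamma$ could be ``undone'' later, and the ensuing marker/coding machinery, is unnecessary: once two branches send the same generator $a\in A_{\gamma+1}$ to elements at positive distance, every later map on each branch still sends $a$ to that same element, so the limit automorphisms differ at $a$. The paper's construction is the Phillips--Weaver tree argument with limit stages handled by Lemma~\ref{L.auto.1} and successor stages by Lemma~\ref{L.cta.1}; your outline replaces both lemmas with a back-and-forth that the stated saturation hypothesis cannot carry.
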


Before proceeding to prove Theorem~\ref{T.auto} we note that 
every countably saturated metric structure of character density $\aleph_1$
has $2^{\aleph_1}$ automorphisms. We don't know whether 
the Continuum Hypothesis implies that every corona of a separable C*-algebra
has $2^{\aleph_1}$ automorphisms (but see \cite{CoFa:Automorphisms}).

By Theorem~\ref{T.auto} and Theorem~\ref{T1.0} we have the following:

\begin{corollary} Assume the Continuum Hypothesis. 
Assume $A$ is a C*-algebra such that 
for every separable subalgebra $B$ of $M(A)$ there  is a $B$-quasicentral 
approximate  unit for $A$ consisting of projections and the center of $C(A)$ 
is separable.  
Then $C(A)$ has $2^{\aleph_1}$ outer automorphisms. \qed 
\end{corollary}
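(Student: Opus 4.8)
The plan is to read the corollary as a straightforward combination of the two results it cites, Theorem~\ref{T1.0.0} and Theorem~\ref{T.auto}, followed by a cardinality count that uses the Continuum Hypothesis a second time in order to upgrade ``automorphisms'' to ``outer automorphisms''. First I would apply Theorem~\ref{T1.0.0}: its hypothesis is precisely that every separable $B\subseteq M(A)$ admits a $B$-quasicentral approximate unit for $A$ consisting of projections, which is exactly what we are given, so $C(A)$ is countably quantifier-free saturated. (Here $A$ is understood to be $\sigma$-unital, since a countable quasicentral approximate unit already furnishes a countable approximate unit for $A$.) This supplies the first hypothesis of Theorem~\ref{T.auto}, and the separability of the center of $C(A)$ is assumed outright; so the only remaining hypothesis to check is that $C(A)$ has character density $\aleph_1$.

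For the character density I would pin the value between $\aleph_1$ and $\aleph_1$. In the intended setting $A$ is separable (e.g.\ $A=\mathcal K(H)$, as in the Calkin-algebra motivation preceding the statement), so $A$ has a faithful nondegenerate representation on a separable Hilbert space $H$ and $M(A)\subseteq \mathcal B(H)$; since $|\mathcal B(H)|=2^{\aleph_0}$, both $M(A)$ and its quotient $C(A)$ have character density at most $2^{\aleph_0}$, which equals $\aleph_1$ under the Continuum Hypothesis. For the lower bound, $C(A)$ is countably quantifier-free saturated, hence countably degree-1 saturated, and it is infinite-dimensional (the corona is nontrivial), so by the earlier proposition asserting that an infinite-dimensional countably degree-1 saturated C*-algebra is nonseparable, its character density is at least $\aleph_1$. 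Combining the two bounds gives character density exactly $\aleph_1$, and Theorem~\ref{T.auto} then yields $2^{\aleph_1}$ automorphisms of $C(A)$.

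The last step is to see that $2^{\aleph_1}$ of these automorphisms are outer, and this is where the cardinal arithmetic carries the argument. A C*-algebra of character density $\aleph_1$ has cardinality at most $\aleph_1^{\aleph_0}$, which under the Continuum Hypothesis equals $(2^{\aleph_0})^{\aleph_0}=2^{\aleph_0}=\aleph_1$; in particular $C(A)$ has at most $\aleph_1$ unitaries and hence at most $\aleph_1$ inner automorphisms. Since $\Aut(C(A))$ has cardinality $2^{\aleph_1}$ while its normal subgroup $\mathrm{Inn}(C(A))$ has cardinality at most $\aleph_1<2^{\aleph_1}$, the quotient $\mathrm{Out}(C(A))=\Aut(C(A))/\mathrm{Inn}(C(A))$ still has cardinality $2^{\aleph_1}$. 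Thus $C(A)$ has $2^{\aleph_1}$ outer automorphisms, as claimed.

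I do not anticipate a serious obstacle, since all of the genuinely hard analytic and set-theoretic content is already sealed inside the two quoted theorems and what is left is bookkeeping. The two points that demand care are: (a) confirming that the standing hypotheses truly place $C(A)$ in the scope of Theorem~\ref{T.auto}, and in particular that the character density is genuinely \emph{equal} to $\aleph_1$ rather than merely bounded above — this is exactly why the nonseparability proposition is invoked, and why separability of $A$ is used for the upper bound; and (b) the passage from counting all automorphisms to counting outer ones, which relies on the Continuum-Hypothesis computation $\aleph_1^{\aleph_0}=\aleph_1$ to keep the number of inner automorphisms strictly below $2^{\aleph_1}$.
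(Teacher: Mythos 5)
Your proposal is correct and is essentially the paper's own (unwritten) argument: the corollary is stated without proof precisely because it is the concatenation of Theorem~\ref{T1.0.0} (countable quantifier-free saturation of $C(A)$), Theorem~\ref{T.auto} ($2^{\aleph_1}$ automorphisms), and the cardinality count $\aleph_1^{\aleph_0}=\aleph_1<2^{\aleph_1}$ bounding the inner automorphisms, which the paper itself sketches in the paragraph preceding Theorem~\ref{T.auto}. Your two points of care are well taken --- the statement tacitly assumes $A$ is separable (or at least $\sigma$-unital with $C(A)$ of character density $\aleph_1$), and your use of the nonseparability of infinite-dimensional countably degree-1 saturated algebras for the lower bound on the character density is exactly the right way to fill that in.
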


Recall  that an  automorphism $\Phi$ of a C*-algebra $C$
 is \emph{approximately inner} if for every
$\e > 0$ and every finite set $F$, there is a unitary $u$ such that
$\| \Phi (a) - u a u^* \| < \e$ for all $a\in F$.
An approximately inner *-isomorphism from a subalgebra of $C$ into $C$ is defined analogously.

The conclusion of the following lemma fails for the Calkin algebra (cf. Proposition~\ref{P2}). 

\begin{lemma}\label{L.auto.1} Assume $C$ is a countably quantifier-free saturated C*-algebra
and $B$ is its separable subalgebra. If $\Phi\colon B\to C$ is an approximately inner *-isomorphism
 then there is a unitary $u\in C$ such that $\Phi(b)=ubu^*$ for all $b\in B$. 
\end{lemma}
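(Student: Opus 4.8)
Lemma~\ref{L.auto.1} asserts that an approximately inner $*$-isomorphism $\Phi\colon B\to C$ is implemented by a single unitary $u\in C$. The natural approach is to encode the requirement ``$u$ implements $\Phi$'' as a countable quantifier-free type in one variable over $B\cup\Phi(B)$, verify that this type is approximately finitely satisfiable using the approximate innerness of $\Phi$, and then invoke countable quantifier-free saturation (via Lemma~\ref{L.un}, which handles the unitary case) to realize it.

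\textbf{Setting up the type.}
First I would fix a countable dense subset $\{b_n : n\in\bbN\}$ of the unit ball of $B$, so that $\{\Phi(b_n):n\in\bbN\}$ is dense in the unit ball of $\Phi(B)$. Consider the type $\bt_\Phi$ in the single variable $x$ consisting, for all $n$, of the conditions
\begin{enumerate}
\item [(i)] $\|x b_n - \Phi(b_n)\, x\|=0$,
\item [(ii)] $\|x^*x - 1\|=0$ and $\|xx^*-1\|=0$.
\end{enumerate}
Conditions (i) say that $xb_n=\Phi(b_n)x$ for all $n$, which by density forces $xb=\Phi(b)x$ for all $b\in B$; combined with $x$ being a unitary $u$, this gives exactly $\Phi(b)=ubu^*$. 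Since these are quantifier-free and there are countably many of them, $\bt_\Phi$ is a countable quantifier-free type over the separable (hence countable-density) set $B\cup\Phi(B)\subseteq C$.

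\textbf{Approximate finite satisfiability.}
The heart of the argument is showing $\bt_\Phi$ is approximately finitely satisfiable by a unitary. Given a finite fragment involving $b_1,\dots,b_k$ and a tolerance $\e>0$, approximate innerness of $\Phi$ applied to the finite set $F=\{b_1,\dots,b_k\}$ yields a unitary $u\in C$ with $\|\Phi(b_j)-u b_j u^*\|<\e$ for each $j\le k$. Multiplying on the right by $u$ gives $\|\Phi(b_j)u - u b_j\|=\|(\Phi(b_j)-u b_j u^*)u\|<\e$ (using that $u$ is a contraction), so this same $u$ $\e$-realizes conditions (i) for $j\le k$ and satisfies (ii) exactly. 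Hence $\bt_\Phi$ is consistent and each finite fragment is witnessed by an actual unitary. I would then apply Lemma~\ref{L.un}: since $C$ is countably quantifier-free saturated and $\bt_\Phi$ is approximately finitely satisfiable by unitaries, it is realized by a unitary $u\in C$. This $u$ satisfies all conditions exactly, so $ub=\Phi(b)u$ for all $b\in B$, i.e.\ $\Phi(b)=ubu^*$, completing the proof.

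\textbf{The main obstacle.}
The only delicate point is the bookkeeping in the approximate-satisfiability step: one must phrase condition (i) in the ``$xb=\Phi(b)x$'' (commutation) form rather than as $\|xbx^*-\Phi(b)\|=0$, since the latter is degree-$2$ in $x$ and $x^*$ and would not obviously be controlled by a degree-$1$ estimate—whereas the commutation form is exactly what approximate innerness delivers after the clean algebraic manipulation $\|\Phi(b_j)u-ub_j\|=\|(\Phi(b_j)-ub_ju^*)u\|$. Everything else is a routine density-plus-saturation argument, so I expect no genuine difficulty beyond choosing this formulation of the type.
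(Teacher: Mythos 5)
Your proof is correct and follows essentially the same route as the paper: encode ``$x$ is a unitary implementing $\Phi$'' as a countable quantifier-free type, use approximate innerness for consistency, and realize it by quantifier-free saturation (via Lemma~\ref{L.un}). The only difference is that the paper writes the conditions as $\|xbx^*-\Phi(b)\|=0$ rather than your commutation form $\|xb-\Phi(b)x\|=0$; since the type already contains the degree-2 unitarity conditions and the hypothesis is full quantifier-free saturation, this distinction is immaterial and the ``main obstacle'' you describe is not actually an obstacle.
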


\begin{proof} This is essentially a consequence of Lemma~\ref{L.un}. 
Fix a countable dense subset $B_0$ of $B$. 
Consider the type $\bt_{\Phi}$ consisting of all conditions of the form 
$\| xbx^* - \Phi(b)\|=0$
for $b\in B_0$ together with $xx^*=1$ and $x^*x=1$. 
The assumption that $\Phi$ is  approximately inner is equivalent to 
the assertion that $\bt_{\Phi}$ is consistent. Since $B_0$ is countable, 
by countable quantifier-free saturation 
there exists $u\in C(A)$ that realizes $\bt_{\Phi}$.  Such $u$ is a unitary which implements $\Phi$. 
\end{proof}

\begin{lemma} \label{L.cta.1} 
Assume $C$ is countably quantifier-free saturated, simple C*-algebra 
whose center is separable. If   $\Phi$ is an automorphism of 
$C$ and $A$ is a separable subalgebra of $C$ then there is an automorphism $\Phi'$ of $C$
distinct from $\Phi$ whose restriction to $A$ is identical to the restriction of $\Phi$ to $A$. 
Moreover, if $\Phi$ is inner then $\Phi'$ can be chosen to be inner. 
\end{lemma}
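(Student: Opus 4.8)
The plan is to reduce the problem to producing a single nontrivial automorphism $\Psi$ of $C$ that fixes $A$ pointwise. Indeed, suppose $\Psi\in\Aut(C)$ satisfies $\Psi\rs A=\id$ on $A$ and $\Psi\neq\id$. Then $\Phi':=\Phi\circ\Psi$ is an automorphism with $\Phi'\rs A=\Phi\rs A$, and $\Phi'=\Phi$ would force $\Psi=\Phi^{-1}\circ\Phi'=\id$; hence $\Phi'\neq\Phi$. If moreover $\Psi$ is chosen inner, then $\Phi'$ is inner whenever $\Phi$ is, since $\Ad(v)\circ\Ad(u)=\Ad(vu)$. Thus the whole statement, including the ``moreover'' clause, follows once such a $\Psi$ is found.

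To build $\Psi$ I would look for a non-central unitary in the relative commutant of $A$ and set $\Psi=\Ad(u)$: if $u\in A'\cap C$ then $uau^*=a$ for every $a\in A$, so $\Ad(u)$ restricts to the identity on $A$, while $\Ad(u)\neq\id$ precisely when $u\notin\cZ(C)$. We may assume $C$ is infinite-dimensional (otherwise the statement is degenerate; in the applications $C$ is nonseparable). Then Lemma~\ref{L.cta} gives that $A'\cap C$ is nonseparable. Since $\cZ(C)$ is separable by hypothesis (in fact $\cZ(C)=\bbC\cdot 1$ by simplicity when $C$ is unital), we have $A'\cap C\not\subseteq\cZ(C)$, so $A'\cap C$ contains some element $x\notin\cZ(C)$.

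The one genuinely technical point is passing from this non-central element $x$ to a non-central \emph{unitary}. Writing $x$ in terms of its self-adjoint real and imaginary parts, both of which again lie in the C*-algebra $A'\cap C$, at least one of them---call it $h=h^*\in A'\cap C$---must fail to be central, so $[h,y]\neq 0$ for some $y\in C$. I would then consider the norm-continuous one-parameter unitary group $t\mapsto e^{ith}$, each of whose members lies in $A'\cap C$ (being a norm-limit of polynomials in $h$ and $1$). The map $t\mapsto e^{ith}ye^{-ith}$ has derivative $i[h,y]\neq 0$ at $t=0$, hence is non-constant, so $e^{it_0h}ye^{-it_0h}\neq y$ for some $t_0$. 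Then $u:=e^{it_0h}\in A'\cap C$ is a unitary with $u\notin\cZ(C)$, and $\Psi:=\Ad(u)$ is the desired nontrivial inner automorphism fixing $A$ pointwise.

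I expect the main obstacle to be exactly this upgrade from a non-central element to a non-central unitary: naive exponentiation $u=e^{ih}$ need not be non-central even when $h$ is, and the one-parameter-group argument above is what circumvents this. The remaining inputs---nonseparability of $A'\cap C$ from Lemma~\ref{L.cta} and separability of $\cZ(C)$---serve only to guarantee that a non-central element of the relative commutant exists in the first place.
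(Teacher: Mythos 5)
Your proof is correct and follows essentially the same route as the paper: the paper's proof is exactly ``by Lemma~\ref{L.cta} find a non-central unitary $u\in A'\cap C$ and set $\Phi'=\Phi\circ\Ad u$.'' The only difference is that you spell out how to extract a non-central unitary from the nonseparability of $A'\cap C$ (your one-parameter-group argument works; alternatively, since a unital C*-algebra is spanned by its unitaries, a non-central element of $A'\cap C$ immediately yields a non-central unitary in it).
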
 

\begin{proof}  
By Lemma~\ref{L.cta}, we can find a non-central unitary $u\in A'\cap C$. 
Therefore $\Phi'= \Phi\circ \Ad u$ is as required. 
\end{proof}

\begin{proof}[Proof of Theorem~\ref{T.auto}] 
By using Lemma~\ref{L.auto.1} and Lemma~\ref{L.cta.1}
we can construct a complete binary tree of height $\aleph_1$ whose branches correspond 
to distinct automorphisms. This standard construction is similar to the one given in \cite{PhWe:Calkin}
but much easier, since in our case the limit stages are covered by Lemma~\ref{L.auto.1}, and
in \cite{PhWe:Calkin} most of the effort was made in the limit stages.   
\end{proof} 

\section{Proofs} 
\label{S.Proofs}

Recall that $A$ is an \emph{essential ideal} of $C$ if
no nonzero element of $C$ annihilates $A$. 
The \emph{strict topology} on $M(A)$ is the topology induced by the family of 
seminorms $\|(x-y)a\|$, where $a$ ranges over $A$. 
If $A$ is separable then the strict topology on $M(A)$ has a compatible 
metric, $\|(x-y)a\|$, where $a$ is any strictly positive element of $A$. 

We note that for any sequence of C*-algebras $A_n$, for $n\in \bbN$, 
the algebra $\prod_n A_n/\bigoplus_n A_n$ is fully countably saturated. 
This is a straightforward analogue of a well-known result in classical model theory (cf. \cite{FaHaSh:Model2}, \cite{BYBHU}). 

\subsection{Quantifier-free saturation}
\label{S.proof.T1}
The proof of Theorem~\ref{T1.0.0} 
 is a warmup for the proof of Theorem~\ref{T1} given in the next subsection. 
In Proposition~\ref{P2} we shall see that the conclusion of Theorem~\ref{T1.0} does not follow from the assumptions of 
Theorem~\ref{T1}. 
Let us start by recalling the statement of Theorem~\ref{T1.0.0}. 

\begin{theorem} \label{T1.0} 
Assume $A$ is a $\sigma$-unital C*-algebra such that  
 for every separable subalgebra $B$ of $M(A)$ there  is a $B$-quasicentral 
approximate  unit for $A$ consisting of projections. 
Then its corona $C(A)$
is  countably quantifier-free saturated. 
\end{theorem}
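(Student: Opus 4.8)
The plan is to realize an arbitrary consistent countable quantifier-free type as a block-diagonal element of $C(A)$ built along a quasicentral sequence of projections. By Definition~\ref{D.Deg1} and the remark that singleton $K_n$ suffice, it is enough to realize a consistent type $\bt=\{\|P_n(\bar x)\|=r_n:n\in\bbN\}$ over a countable subset of $C(A)$. Let $\pi\colon M(A)\to C(A)$ be the quotient map. First I would lift every coefficient of every $P_n$ to $M(A)$, choose lifts $\tilde P_n$ of the $P_n$, and let $B\subseteq M(A)$ be the separable C*-subalgebra they generate. The hypothesis then furnishes a $B$-quasicentral approximate unit $(e_m)$ for $A$ consisting of projections, which I take to be increasing.

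Next I would carve $M(A)$ into orthogonal finite blocks. The identity $\|\pi(X)\|=\lim_m\|(1-e_m)X(1-e_m)\|$ holds for every $X\in M(A)$: the lower bound because $\pi(1-e_m)=1$ forces $\|(1-e_m)X(1-e_m)\|\geq\|\pi(X)\|$, and the upper bound because $\|X(1-e_m)\|\to\|\pi(X)\|$. Using this together with the approximate finite satisfiability of $\bt$, I would inductively choose $0=N_0<N_1<\cdots$ and, writing $q_j=e_{N_j}-e_{N_{j-1}}$, produce for each $j$ a tuple of contractions $d^{(j)}_k$ in the corner $q_jM(A)q_j$ realizing the $j$-th finite fragment of $\bt$ to within $1/j$: pick a lift $\tilde d^{(j)}$ of an approximate solution, choose $N_{j-1}$ so large that the tail compression by $1-e_{N_{j-1}}$ already brings $\|\tilde P_n(\tilde d^{(j)})\|$ within $1/j$ of $r_n$ for $n\leq j$, then choose $N_j$ large enough that the finite block $q_j$ captures this norm, and set $d^{(j)}_k=q_j\tilde d^{(j)}_kq_j$.

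Finally I would glue: put $b_k=\sum_j d^{(j)}_k$, a strict sum of contractions with pairwise orthogonal left and right supports, hence a contraction in $M(A)$. The decisive point is that the $q_j$ are genuine orthogonal projections, so corner-supported elements multiply inside their own corner with no stray factors; consequently $\tilde P_n(\bar b)$ is, up to a quasicentrality error that lies in $A$ and vanishes along the blocks, block-diagonal with $j$-th block equal to the value of the $j$-th approximate solution. Passing to the corona kills the finite initial segments, so $\|\pi(\tilde P_n(\bar b))\|=\limsup_j\|q_j\tilde P_n(\bar b)q_j\|=r_n$, and $\bar b:=\pi(\bar b)$ realizes $\bt$. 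If a self-adjoint, positive, or unitary realization is wanted, I would combine this with Lemma~\ref{L.sa} or Lemma~\ref{L.un}.

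The main obstacle is the block step: showing that a single finite block $q_j$ can capture the essential norm of an approximate solution up to $1/j$ (which uses the approximate-unit structure to approximate a norming tail state by one living in a finite block) and, hand in hand with this, that corner-supported solutions are multiplied cleanly because the $q_j$ are projections. This is exactly the place where genuine projections are indispensable rather than the merely positive quasicentral units that suffice for the degree-1 result of Theorem~\ref{T1}, and it is precisely this clean product structure that the Calkin algebra lacks.
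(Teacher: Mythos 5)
Your architecture coincides with the paper's proof of Theorem~\ref{T1.0}: compress lifted approximate solutions into the orthogonal blocks $q_j=e_{N_j}-e_{N_{j-1}}$ of a quasicentral approximate unit of projections, sum strictly, and read off the corona norm as a $\limsup$ of block norms. But there is one genuine gap, and it sits exactly at the step you flag as decisive. You take the approximate unit quasicentral only for the algebra $B$ generated by the lifted \emph{coefficients}. That is enough to pull $q_j$ past the coefficients, and (since $q_j$ commutes exactly with each block-diagonal $b_k$) it reduces the problem to showing $\|\tilde P_n(q_j\tilde d^{(j)}q_j)\|\approx r_n$. What you have actually arranged, however, is $\|q_j\tilde P_n(\tilde d^{(j)})q_j\|\approx r_n$ (the block captures the essential norm of the polynomial evaluated at the \emph{uncompressed} solution). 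Passing between $\tilde P_n(q_j\tilde d^{(j)}q_j)$ and $q_j\tilde P_n(\tilde d^{(j)})q_j$ requires removing the copies of $q_j$ sitting \emph{inside} each monomial of degree $\geq 2$, e.g.\ turning $a_0(q_j\tilde d q_j)a_1(q_j\tilde d q_j)a_2$ into $q_j a_0\tilde d a_1\tilde d a_2 q_j$, and that needs the commutators $[q_j,\tilde d^{(j)}_k]$ to be small --- coefficient quasicentrality does not give this. The failure is real, not cosmetic: with $\tilde d$ the unilateral shift and $q$ the rank-one projection onto the first basis vector, $(q\tilde dq)^*(q\tilde dq)=0$ while $q\tilde d^*\tilde dq=q$, so a compressed approximate solution can lose the norm of the polynomial entirely. (This is also precisely the point where degree $\geq 2$ differs from degree $1$: in a degree-1 monomial $axb$ the variable occurs once, so every internal $q_j$ can be pushed to the outside past coefficients alone.)

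The paper addresses this by requiring, in condition (1) of its proof, that $\|[q_n,a]\|$ be small as $a$ ranges over the coefficients \emph{and} over the lifted approximate solutions $b^j_1,\dots,b^j_j$, and by the second inequality of Lemma~\ref{L.P.K}, which bounds $\|qP(\bar b)-qP(q\bar b)q\|$ in terms of commutators of $q$ with both the coefficients and the entries of $\bar b$. The repair to your argument is cheap: the approximate solutions depend only on the type and can all be fixed (and lifted) in advance, so enlarge $B$ to contain them before invoking the hypothesis of the theorem. With that correction your proof is essentially the paper's.
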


In this section and elsewhere we shall write $\bar b$ for an $n$-tuple, 
hence
\[
\bar b=(b_1, \dots, b_n)
\]
 (with $n$ clear from the context) 
in order to simplify the notation. We also write 
\[
q\bar b =(q b_1, \dots, q b_n). 
\]
In our proof of Theorem~\ref{T1.0} we shall need the following  fact. 

\begin{lemma} \label{L.P.K} Assume $P(x_1,\dots, x_n)$ 
is a *-polynomial with coefficients in a C*-algebra $C$. 
Then there is constant $K<\infty$, depending only on $P$, such that 
for all $a$ and $b_1,\dots, b_n$ in $C$  we have
\[
\| [ a, P(\bar b)]\|\leq K \max_c  \|[a,c]\| \|a\| \max_{j\leq n}   \|b_j\|
\]
where $c$ ranges over coefficients of $P$ and $b_1, \dots, b_n$. 

If in addition $q$ is a projection then we have 
\[
\| q P(\bar b)- q P( q \bar b) q \|\leq K \max_c  \|[q,c]\| \|a\| \max_{j\leq n}  \|b_j\|. 
\]
\end{lemma}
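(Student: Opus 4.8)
The engine of both estimates is that, for fixed $a$, the map $\delta_a(x)=[a,x]=ax-xa$ is a derivation: it is linear and obeys the Leibniz rule $\delta_a(xy)=\delta_a(x)\,y+x\,\delta_a(y)$. Since $P$ is a finite linear combination of monomials and both displayed quantities depend linearly on $P$, it suffices to prove an estimate of the stated shape for a single monomial; the constant $K$ is then the corresponding sum over the monomials of $P$ of structural constants (the number of factors and the products of the norms of the coefficients), and so depends on $P$ alone.

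\emph{First inequality.} Write a monomial as $m(\bar x)=c_0\,y_1\,c_1\cdots y_d\,c_d$, where each $c_i$ is a coefficient of $P$ and each $y_i$ is one of $x_{j_i}$, $x_{j_i}^{*}$. Iterating the Leibniz rule expresses $[a,m(\bar b)]$ as a sum of $2d+1$ terms, each obtained from the product of the factors of $m(\bar b)$ by replacing exactly one factor with its $\delta_a$-image. I would bound each such term by the norm of its single commutator times the product of the norms of the remaining factors. Every commutator appearing is $[a,c_i]$ or $[a,b_{j}]$, hence at most $\max_c\|[a,c]\|$, while the remaining factors are coefficients (norms absorbed into $K$) and values $b_j$ (each at most $\max_{j\le n}\|b_j\|$). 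Summing gives a bound of the displayed form. The one point requiring attention is a factor $y_i=b_j^{*}$: here one uses $\|[a,b_j^{*}]\|=\|[a^{*},b_j]\|$, which equals $\|[a,b_j]\|$ whenever $a$ is self-adjoint, as in every application of the lemma (where $a$ is a member of an approximate unit or a projection); in general one enlarges the maximum to range over the $b_j^{*}$ as well.

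\emph{Second inequality.} Now $a=q$ is a projection, and the relevant algebraic fact is that for a single factor $z$ one has $qz(1-q)=q[q,z]$ and $(1-q)zq=-[q,z]q$, both immediate from $q^{2}=q$. I would pass from $q\,m(\bar b)$ to $q\,m(q\bar b)\,q$ by telescoping through the fully sandwiched word $q\,c_0\,q\,b_{j_1}\,q\,c_1\cdots q\,c_d\,q$: first inserting the interior projections one slot at a time from left to right, so that each newly inserted $1-q$ is immediately preceded by a $q$ and a single factor, and then deleting the superfluous projections sitting between a $b_{j_i}$ and the following coefficient (which $q\,m(q\bar b)\,q$ does not carry). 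By the identities above, each insertion or deletion alters the word by a term of the form $(\text{bounded})\cdot q[q,z]\cdot(\text{bounded})$ with $z$ a single coefficient or $b_j$; since $q$ is self-adjoint the adjoint case $z=b_j^{*}$ gives $\|[q,b_j^{*}]\|=\|[q,b_j]\|$ with no extra work. Summing these error terms over the $O(d)$ steps and over the monomials of $P$ yields the bound, again with $K$ depending only on $P$.

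I expect the only real difficulty to be bookkeeping rather than ideas: one must organize the Leibniz expansion and the telescope so that every error term is visibly a single commutator $[a,\cdot]$ (respectively $[q,\cdot]$) flanked by norm-bounded factors, and then verify that the accumulated constant depends only on the combinatorics of $P$ and the norms of its coefficients, not on $a$, $q$, or the $b_j$. The scalar factors $\|a\|$ and $\max_{j\le n}\|b_j\|$ in the displayed right-hand sides then record, in a deliberately crude form, the product of the norms of the non-commutator factors occurring in each term.
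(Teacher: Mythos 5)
Your proof is correct and is essentially the paper's argument written out in full: the paper disposes of the first inequality with the phrase ``straightforward induction on the complexity of $P$'' (which is exactly your Leibniz expansion over monomials) and of the second with ``use the first one and the fact that $q=q^{d+1}$'' (which is your telescoping insertion of projections, each step costing one commutator $q[q,z](1-q)$). Your remark that factors $y_i=b_j^{*}$ force either self-adjointness of $a$ or enlarging the range of $c$ to include the $b_j^{*}$ correctly identifies a minor imprecision in the statement that the paper's two-sentence proof glosses over, and is handled appropriately since in every application $a$ is a positive element or a projection.
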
 

\begin{proof} The existence of 
constant $K$ satisfying the first inequality can be 
 proved  by a straightforward induction on the complexity of $P$. 
 For the second inequality use the first one and the fact that $q=q^{d+1}$, where $d$ is the degree of $P$
 in order to find a large enough $K$. 
 \end{proof}

\begin{proof}[Proof of Theorem~\ref{T1.0}]  Fix a countable quantifier-free type $\bt$ over $C(A)$
and enumerate all polynomials occurring in it as $P_n(\bar x)$, for $n\in \bbN$. 
By re-enumerating and adding redundancies we may assume that all variables of $P_n$ are among   
$x_1,\dots, x_n$. Let $P_n^0(\bar x)$ be a polynomial over $M(A)$ 
corresponding to  $P_n(\bar x)$. Let $K_n$ be a constant corresponding to $P_n^0$ as 
given by Lemma~\ref{L.P.K}.   
Let $B$ be a separable subalgebra of $M(A)$ such that all coefficients of all polynomials
$P_n^0(\bar x)$  belong to  $B$. 

Let $r_n$ for $n\in \bbN$ be  such that $\bt$ is the set of conditions
$\|P_n(\bar x)\|=r_n$ for $n\in \bbN$. 
For all $n$ fix $b^n_1,\dots, b^n_n$ such that 
\[
|\|\pi(P_j^0(b_1^n,\dots, b^n_n))\|-r_n| <2^{-n}
\]
for all $j\leq n$ and $\|b^n_k\|\leq 2$. The latter is possible by our assumption that the condition 
$\|x_n\|\leq 1$ belongs to $\bt$ for all $k$. 

Let $q_n$, for $n\in \bbN$, be a $B$-quasicentral  approximate unit for $A$ consisting of 
projections. 
By going to a subsequence we may assume the following apply for all $j\leq n$  (with $q_0=0$): 
\begin{enumerate}
\popcounter
\item  \label{I.T1.0.1} $\|[q_n,a]\|<2^{-n}K_n^{-1}$ 
 when $a$ ranges over coefficients of  $P_j^0$ and all $b^j_1,\dots, b^j_j$,
 \item $|\|(q_{n+1}-q_n) P_j^0(b^j_1,\dots, b^j_j) (q_{n+1}-q_n)\|-r_n|<1/n$, 
 \pushcounter
 \end{enumerate}
 Let 
 \[
 p_n = q_{n+1}-q_n
 \]
 For every $k$ the series $\sum_n p_n b^n_k p_n$ is convergent with respect to the strict topology. 
 Let $b_k$ be equal to the sum of this series. By the second inequality of 
 Lemma~\ref{L.P.K} and \eqref{I.T1.0.1}
 we have that for all $k\leq n$
\begin{enumerate}
\popcounter
\item $ |\|p_n P^0_k(b_1, \dots, b_k) \|-\|p_n P^0(p_n b_1 p_n, \dots, p_n b_k p_n ) p_n\| |<2^{-n}$.
\pushcounter
\end{enumerate}
Since $p_n b_k p_n =p_n b^n_k p_n$, we conclude that 
\[
\|P_k(\pi(\bar b))\|=\|\pi(P^0_j(\bar b))\|=\limsup_n \| p_n P^0_j(p_n b^n_1 p_n,\dots , p_n b^n_j p_n)\| 
=r_n. 
\]
Therefore $\pi(b_n)$, for $n\in \bbN$, realizes $\bt $ in $C(A)$. 
 \end{proof} 

\subsection{Degree-1 saturation}
We shall use   \cite[Corollary~6.3]{Pede:Corona} which states that 
if $0\leq a\leq 1$ and $\|b\|=1$, then $\|[a,b]\|\leq \e\leq 1/4$ 
implies $\|[a^{1/2}, b]\|\leq 5\e^{1/2}/4$. We shall also need the following 
lemma. 

\begin{lemma} \label{L.triv}
Assume $a$ and $b$ are positive operators. 
Then $\|a+b\|\geq \max(\|a\|, \|b\|)$. 
\end{lemma}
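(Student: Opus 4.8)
The plan is to prove that for positive operators $a$ and $b$ one has $\|a+b\|\geq\max(\|a\|,\|b\|)$. The key idea is that the partial order on positive operators is compatible with the norm: if $0\leq x\leq y$ then $\|x\|\leq\|y\|$.

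First I would observe that since $b\geq 0$, we have $a\leq a+b$, and likewise since $a\geq 0$ we have $b\leq a+b$. Indeed, $a+b-a=b\geq 0$ and $a+b-b=a\geq 0$, so both inequalities hold in the operator ordering. Then I would invoke the standard fact that the norm is monotone on the cone of positive operators, i.e.\ $0\leq x\leq y$ implies $\|x\|\leq\|y\|$. Applying this to $a\leq a+b$ gives $\|a\|\leq\|a+b\|$, and applying it to $b\leq a+b$ gives $\|b\|\leq\|a+b\|$. Taking the maximum of the two left-hand sides yields the claim.

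For completeness I would recall why the norm is monotone: for a positive operator $x$, one has $\|x\|=\sup_{\|\xi\|\leq 1}\langle x\xi,\xi\rangle$ (the operator norm of a positive element equals its numerical radius), so $0\leq x\leq y$ gives $\langle x\xi,\xi\rangle\leq\langle y\xi,\xi\rangle$ for every unit vector $\xi$, and taking suprema yields $\|x\|\leq\|y\|$. Equivalently, one can argue purely C*-algebraically: $0\leq x\leq y$ implies $x\leq\|y\|\cdot 1$, hence $\|x\|\leq\|y\|$ since the spectrum of $x$ lies in $[0,\|y\|]$.

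There is essentially no obstacle here; the statement is elementary and the only substantive input is monotonicity of the norm on positives, which is a standard C*-algebra fact. The main point to be careful about is simply to state the monotonicity cleanly and apply it symmetrically to recover the maximum rather than just one of the two bounds.
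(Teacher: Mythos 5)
Your proof is correct, but it takes a different route from the paper's. You reduce the lemma to the standard monotonicity of the norm on the positive cone: $b\geq 0$ gives $a\leq a+b$, hence $\|a\|\leq\|a+b\|$, and symmetrically for $b$; your justification of monotonicity (via the numerical radius, or via $x\leq\|y\|\cdot 1$ and spectral containment) is standard and sound. The paper instead argues directly with vectors: normalizing so that $\|a\|=1\geq\|b\|$, it picks a unit vector $\xi$ with $\|\xi-a\xi\|<\e$ and expands $\|(a+b)\xi\|^2=\|a\xi\|^2+\|b\xi\|^2+2\Re(a\xi|b\xi)$, using positivity of $b$ only to bound the cross term $\Re(a\xi|b\xi)$ from below by $-\e$. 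Your argument is shorter, purely order-theoretic, and works verbatim for positive elements of an abstract C*-algebra without invoking a representation; the paper's computation is more hands-on and incidentally yields the slightly stronger estimate $\|(a+b)\xi\|^2\geq\|a\xi\|^2+\|b\xi\|^2-2\e$ at an approximate norming vector for $a$, though that extra information is not used elsewhere. Either proof is acceptable for the purpose the lemma serves (establishing the lower bound in the computation of $\|\pi(\sum_j f_j x_j f_j)\|$).
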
 
\begin{proof} We may assume $1=\|a\|\geq \|b\|$. Fix $\e>0$ and 
let $\xi$ be a unit vector such that $\eta=\xi-a\xi$ satisfies $\|\eta\|< \e$. Then 
$\Re(a\xi|b\xi)=\Re(\xi|b\xi)+\Re(\eta|b\xi)\geq \Re(\eta |b\xi)>  -\e$ since $b\geq 0$. 
We therefore have 
 \begin{align*}
\|(a+b)\xi\|^2&=((a+b)\xi| (a+b)\xi)\\
&=\|a\xi\|^2+\|b\xi\|^2+2 \Re(a\xi|b\xi)> 1+\|b\xi\|^2 -2\e
\end{align*} 
and since $\e>0$ was arbitrary the conclusion follows. 
\end{proof}

\begin{lemma} \label{L1}
Assume $M$ is a  C*-algebra and a $\sigma$-unital C*-algebra
$A$ is an essential ideal of $M$. Furthermore assume
$F_n$, for $n\in \bbN$, 
is an increasing sequence of finite subsets of the unit ball of $M$ and $\e_n$, for $n\in \bbN$, 
is a decreasing sequence of positive numbers converging to 0. 
Then $A$ has an approximate unit $e_n$, for $n\in \bbN$ such that  with  (setting $e_{-1}=0$)
\[
f_n =(e_{n+1}-e_n)^{1/2}
\]
for all $n$ and all $a\in F_n$ we have the following:
\begin{enumerate}
\item \label{T1.1} $\|[a,f_n]\|\leq \e_n$,
\item \label{T1.2} $\| f_n a f_n \|\geq \|\pi(a)\|-\e_n$ (where $\pi\colon M\to M/A$ is the quotient map), 
\item\label{T1.3}  $\|f_m f_n\|=0 $ if $|m-n|\geq 2$,  
\item\label{T1.4}  $\|[f_n,f_{n+1}]\|\leq \e_n$. 
\pushcounter
\end{enumerate}
\end{lemma}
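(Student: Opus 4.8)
The plan is to build the approximate unit $(e_n)$ by recursion so that it is quasicentral with respect to a growing sequence of finite sets, so that consecutive elements nest exactly, and so that each $e_{n+1}$ is taken far enough out along the unit to force the norm estimate \eqref{T1.2}. Having chosen $e_0,\dots,e_n$, I set $G_{n+1}=F_{n+1}\cup\{e_0,\dots,e_n\}$ and choose $e_{n+1}$ from an increasing approximate unit of $A$ that is quasicentral for $G_{n+1}$ (such units exist by \cite{Arv:Notes}), taking it so far out that $\|[e_{n+1},x]\|$ is as small as we like for every $x\in G_{n+1}$ and that \eqref{T1.2} holds. Since $F_n$ is finite and only finitely many commutators enter at each stage, all of these are ``eventual'' conditions along the net and can be met at once; a standard refinement (as in the approximate-unit constructions of \cite{Pede:Corona}) additionally secures the exact nesting $e_{n+1}e_n=e_n$ with $e_{n+1}\ge e_n$, which is compatible with quasicentrality and with the remaining estimates because those are stable under small perturbations. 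I then set $f_n=(e_{n+1}-e_n)^{1/2}$.

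Property \eqref{T1.3} is then purely algebraic: exact nesting propagates to $e_ke_j=e_j$ for $k>j$, so for $|m-n|\ge2$ one computes $(e_{m+1}-e_m)(e_{n+1}-e_n)=0$, whence $f_m^2f_n^2=0$ and therefore $f_mf_n=0$, as these are orthogonal positive elements. For \eqref{T1.1} I would bound $\|[a,e_{n+1}-e_n]\|\le\|[a,e_{n+1}]\|+\|[a,e_n]\|$, which is small because $e_{n+1}$ is quasicentral for $F_{n+1}\ni a$ and $e_n$ is quasicentral for $F_n\ni a$, and then pass to the square root via \cite[Corollary~6.3]{Pede:Corona} (legitimate since $0\le e_{n+1}-e_n\le1$) to obtain $\|[a,f_n]\|\le\e_n$. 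Property \eqref{T1.4} follows from the same principle applied twice: $\|[f_n,e_{n+1}]\|$ is controlled because $[e_{n+1}-e_n,e_{n+1}]=[e_n,e_{n+1}]$ is small and \cite[Corollary~6.3]{Pede:Corona} passes this through $f_n$, while $\|[f_n,e_{n+2}]\|$ is small because $e_{n+2}$ was taken quasicentral for $\{e_n,e_{n+1}\}$; combining these and applying \cite[Corollary~6.3]{Pede:Corona} once more through $f_{n+1}=(e_{n+2}-e_{n+1})^{1/2}$ yields $\|[f_n,f_{n+1}]\|\le\e_n$.

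The estimate \eqref{T1.2} is the analytic heart of the lemma and the step I expect to be the main obstacle, since $f_naf_n$ lies in the ideal $A$ and so its norm is invisible in the quotient; the content is that the ``annulus'' $e_{n+1}-e_n$ recaptures the quotient norm carried by the tail $1-e_n$. I would argue in a faithful nondegenerate representation $M\subseteq B(H)$, in which $A$ acts nondegenerately and hence $e_m\to1$ strongly. Because $e_n\in A$ and $\pi$ annihilates $A$, one has $\pi\bigl((1-e_n)^{1/2}a(1-e_n)^{1/2}\bigr)=\pi(a)$, so $\|(1-e_n)^{1/2}a(1-e_n)^{1/2}\|\ge\|\pi(a)\|$. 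Writing $\|f_naf_n\|=\|(e_{n+1}-e_n)^{1/2}a(e_{n+1}-e_n)^{1/2}\|$ and letting $e_{n+1}$ run out along the increasing unit, strong continuity of the square root gives $(e_{n+1}-e_n)^{1/2}a(e_{n+1}-e_n)^{1/2}\to(1-e_n)^{1/2}a(1-e_n)^{1/2}$ strongly; since the operator norm is lower semicontinuous for the strong topology, $\liminf_{e_{n+1}}\|f_naf_n\|\ge\|\pi(a)\|$. Hence for $e_{n+1}$ far enough out, and for all $a\in F_n$ simultaneously, $\|f_naf_n\|\ge\|\pi(a)\|-\e_n$, which is exactly the choice made in the recursion above; the elementary Lemma~\ref{L.triv} streamlines the norm estimates for the sums of positive compressions that occur along the way.
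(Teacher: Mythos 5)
Your route is essentially the paper's: Arveson's quasicentral approximate units, Pedersen's \cite[Corollary~6.3]{Pede:Corona} to pass commutator estimates through the square root, exact nesting of the $e_n$ to get (3) and (4), and a faithful representation in which $A$ acts nondegenerately plus lower semicontinuity of the operator norm under strong convergence for (2). Your treatment of (2) --- estimating $\|f_naf_n\|$ directly via strong convergence of $(e_{n+1}-e_n)^{1/2}a(e_{n+1}-e_n)^{1/2}$ to $(1-e_n)^{1/2}a(1-e_n)^{1/2}$ --- is fine and, if anything, a little cleaner than the paper's, which first secures $\|(e_{n+1}-e_n)a\|\geq\|\pi(a)\|-\delta_n$ and then converts to $\|f_naf_n\|$ using the commutator bounds.

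The one step you do not actually justify is the only genuinely constructive one: producing $e_{n+1}$ that is simultaneously quasicentral for $G_{n+1}$ \emph{and} exactly nested over $e_n$. Your stated reason --- that exact nesting is ``compatible with quasicentrality \dots because those are stable under small perturbations'' --- does not do the job: $e_{n+1}e_n=e_n$ is an exact algebraic identity, not something obtained by slightly perturbing a quasicentral element, and without it your proof of (3) (via $f_m^2f_n^2=0$) and your trivialization of (4) degenerate into mere $\e$-estimates, whereas (3) asserts $\|f_mf_n\|=0$ on the nose. The paper closes this gap concretely: fix a strictly positive $h\in A$ and use continuous functional calculus to build a nested approximate unit $(e_n^{-1})$; the elements of that unit which exactly dominate a given $e_n$ again form an approximate unit of $A$, and convex combinations of them still satisfy $ue_n=e_n$, so Arveson's theorem \cite{Arv:Notes} can be applied \emph{inside} this subfamily --- equivalently, Arveson's convex combinations taken over successive disjoint blocks of a nested unit preserve the nesting. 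With that single repair (or by weakening (3) to $\|f_mf_n\|\leq\e_n$, which is all the paper's later arguments truly need), your argument goes through.
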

\begin{proof}  
Let $\delta_n=(4\e_n/25)^2$. 
 By \cite[\S 1]{Arv:Notes} inside the convex closure of any approximate unit of $A$ 
 we can find an approximate unit $(e_n^0)$ of 
$A$ such that 
\begin{enumerate}
\popcounter
\item $\|e_n^0 a - a e_n^0\|\leq \delta_n$  for all $a\in F_n\cup \{e^0_i: i<n\}$. 
\pushcounter
\end{enumerate}
In order to take care of the condition \eqref{T1.3} we do the following.  
Let $h$ be a strictly positive element of $A$. By continuous functional calculus we choose 
an approximate unit $(e_n^{-1})$ of $A$ satisfying \eqref{T1.3}.
Applying Arveson's construction  to subsequences of $(e_n^{-1})$,  $n\in \bbN$, preserves \eqref{T1.3}.    
By going to a subsequence, we can assure that $\|e_n^0 e_{n+1}^0 - e_n^0\|\leq \delta_n$ 
for all $n$. 

Then for every subsequence $(e_n)$ of $(e_n^0)$ and $f_n$ defined as above we will 
have \eqref{T1.1} and \eqref{T1.4} by the choice of
 $\delta_n$ and  \cite[Corollary~6.3]{Pede:Corona}.
 Also, again using this corollary if $n\geq m+2$ then  
 \[
\| f_m f_n\|^2=\|f_n f_m^2 f_n\|\leq \|(e^0_{m+1}-e^0_n)(e^0_{n+1}-e^0_n)\|+\frac{\e_n}5
\leq \e_n. 
\]
Since $A$ is an essential ideal of $M$, there is a faithful representation $\alpha\colon M\to B(H)$
such that $\alpha[A]$ is an essential ideal of $B(H)$ (this is essentially by 
\cite[II.6.1.6]{Black:Operator}). In particular $\alpha(e_n)$ strongly converges to $1_H$. 
 Therefore for every $a\in M$, $m\in \bbN$,  and $\e>0$ 
  there is $n$ large enough so that $\| \alpha(a) (e_n-e_m)\|\geq \|\alpha(a)\|-\e$. 
  Using this observation we can recursively find a subsequence $(e_n)$ of $(e_n^0)$ 
  such that  $\|(e_{n+1}-e_n) a\|\geq \|\pi(a)\|-\delta_n$ for all $a\in F_n$. 
  Therefore $\|f_n a f_n\|\geq \|\pi(a)\| -\e_n$ for all $a\in F_n$ and  
  \eqref{T1.2} holds. 
    \end{proof}

Fix a $\sigma$-unital C*-algebra $A$; let $M=M(A)$, and   $\e_n=2^{-n}$. Now by applying Lemma~\ref{L1}  
we get  $A,M, F_n, (e_n)$ and $(f_n)$, 
 for $n\in \bbN$. We shall show that in this situation these objects have the additional 
 properties in 
 formulas \eqref{I.f2}--\eqref{I.inX} below. 
\begin{enumerate}
\popcounter
\item \label{I.f2}
The series $\sum_n f_n^2$ strictly converges to 1. 
\pushcounter
\end{enumerate}
Since $A$ is $\sigma$-unital, we can pick a strictly positive $a\in A$. 
Therefore the strict topology is given by compatible  metric $d(b,c)=\| a(b-c)\|$. 
Fix $\e>0$. 
Let $n$ be large enough so that $\| a e_{n+1}-a\|<\e$. Since $1-e_{n+1}=\sum_{j=n+1}^\infty f_j^2$, 
\eqref{I.f2} follows. 
\begin{enumerate}
\popcounter
\item \label{I.bj}
For every sequence $(b_j)$ in the unit ball of $M$  
the series $\sum_j f_j b_j f_j$ is strictly convergent. 
\pushcounter
\end{enumerate}
We first note that $0\leq c\leq d$ implies $\|cb\|\leq \| db\|$ for all $b$. 
This is because $\|cb\|^2=\|b^*c^2b\|\leq \|b^* d^2 b\|=\|db\|^2$.

Since every element $b$ of a C*-algebra 
is a linear combination of four positive elements $b=c_0-c_1+ic_2 -ic_3$, we may assume 
$b_j\geq 0$ for all $j$. 
Fix $\e>0$ and find $n$ large enough so that (with $a\in A$ strictly positive) 
$\|\sum_{j=n}^\infty (f_j^2)a\|<\e$. 
Then $0\leq \sum_{j\geq n} f_j b_j f_j\leq \sum_{j\geq n} f_j^2$. 
Therefore by the above inequality applied 
with $c=\sum_{j\geq n} f_j b_j f_j$ and $d=\sum_{j\geq n} f_j^2$
we have $\|ca\|\leq \|da\|\leq \e$. 

\begin{enumerate}
\popcounter
\item \label{I.sum1} 
$\|\sum_j f_j x_j f_j \|\leq \sup_j \| f_j x_j f_j\|$ for every norm-bounded sequence $(x_j)$. 
\item \label{I.sum1+}  If in addition $\sup_j \|f_j x_j f_j\|=\sup_j \|x_j\|$ then 
we moreover have the equality in \eqref{I.sum1}. 
\pushcounter
\end{enumerate}
%
In order to prove \eqref{I.sum1} consider the C*-algebra $N=\prod_{\bbN} M$. 
Each map 
\[
N\ni (x_k)_{k\in \bbN}\mapsto f_j x_j f_j\in M
\]
 for $j\in \bbN$
is completely positive on $N$,  and therefore for each $n\in \bbN$  the 
map $(x_k)_{k\in \bbN}\mapsto \sum_{j\leq n} f_j x_j f_j$ is completely positive as well. 
The  supremum of these maps is
also  a completely positive map. By the assumption that  $\sum_ j f_j^2=1$ 
this map is also unital, and therefore of norm $1$. The inequality \eqref{I.sum1} follows. 

In order to prove \eqref{I.sum1+} let $\alpha=\sup_j \|x_j\|$. We may assume $\alpha=1$. 
Fix $\e>0$, unit vector $\xi$, 
and $n$ such that $\| (f_n x_n f_n) \xi\|>1-\e$. 
Then $\|f_n \xi\|\geq 1-\e$ and therefore $|(f_n^2\xi|\xi)|=\|f_n\xi\|\geq 1-\e$ 
and this implies that $\|\xi-f_n^2\xi\|\leq \e$.  
Since $\sum_j f_j^2=1$, this shows that  
$\|\sum_j (f_j x_j f_j) \xi\|\approx \|(f_n x_n f_n) \xi\|$ and the conclusion follows. 

Recall that $\pi\colon M(A)\to C(A)$ is the quotient map. In the following the norm on the left-hand side of the equality is computed in the corona and the norm on the right-hand side is computed in the multiplier algebra.

\begin{enumerate}
\popcounter
\item \label{I.sum2} 
$\|\pi(\sum_j f_j x_j f_j )\|=\limsup_j \| f_j x_j f_j\|$ for every bounded sequence $(x_j)$ 
such that $\sup_j \| f_j x_j f_j\|=\sup_j \|x_j\|$. 
\pushcounter
\end{enumerate}
Since $\sum_{j=0}^\infty f_j x_j f_j- \sum_{j=m}^\infty f_j x_j f_j$ is in $A$ for all $m\in \bbN$, 
the inequality $\leq$  follows from \eqref{I.sum1} and $\|\pi(a)\|\leq \|a\|$. 
Similarly, $\geq$ follows from \eqref{I.sum1+}. 

The converse inequality follows by Lemma~\ref{L.triv}. 
\begin{enumerate}
\popcounter
\item \label{I.Xfn} 
$X_{(f_n)}=\{a\in M: \sum_n \|[a,f_n]\|<\infty\}$
 is a subalgebra of $M$ including $C^*(\bigcup_n F_n)$.  
\pushcounter
\end{enumerate}
Since $b\in F_j$ implies $\|[b,f_n]\|\leq 2^{-n}$ for all $n\geq j$, we have  $\bigcup_j F_j \subseteq X_{(f_n)}$. 

For $a$ and $b$ in $M$ we have 
$[a+b,f_n]=[a,f_n]+[b,f_n]$, $\|[a^*,f_n]\|=\|[a,f_n]\|$
and $\|[ab,f_n]\|\leq \|a\|\cdot \| [b,f_n]\|+\|b\|\cdot \|[a,f_n]\|$. 
Therefore $X_{(f_n)}$ is a *-subalgebra of $M$. 

$X_{(f_n)}$ is not necessarily norm-closed but this will be of no consequence.

\begin{enumerate}
\popcounter
\item \label{I.binA} 
The map 
$\Lambda=\Lambda_{(f_n)}$ from  $M$ into $M$ 
defined by 
\[
\textstyle\Lambda (a)=\sum_n f_n a f_n
\]
is completely positive  
 and it satisfies $b-\Lambda(b)\in A$ for all $b\in X_{(f_n)}$. 
\pushcounter
\end{enumerate}
Note that $\|\Lambda(b)\|\leq \|b\|$ by \eqref{I.sum1}, and the map is clearly completely 
positive. Fix $b\in X_{(f_n)}$ and $\e>0$. 
Since $b\in X_{(f_n)}$ the series  $\delta_j=\|f_j b-b f_j\|$ is convergent, and we can 
pick  $n$ large enough to have $\sum_{j\geq n} \| f_j b-b f_j\|\leq e$. 
We write 
$c\sim_A d$ for $c-d\in A$ and $c\sim_{\e} d$ for $\|c-d\|\leq \e$ (clearly the latter is not an equivalence relation). 
We have $b-(1-e_n)b\in A$ and $\sum_{j\leq n} f_j b f_j\in A$. Also, 
with $\delta=\sum_{j\geq n}\delta_j$ we have 
\[
\textstyle (1-e_n)b =\sum_{j=n}^\infty f_j^2 b\sim_\delta\sum_{j=n}^\infty f_j b f_j
\]
and the conclusion follows. 

\begin{enumerate}
\popcounter
\item \label{I.inX}
If $\sup_j \|x_j\|<\infty$  
and $\delta_j = \sup_{i\geq j} \|[x_j,f_i]\|$ are such that $\sum_j \delta_j<\infty$, 
then $x=\sum_j f_j x_j f_j$ belongs to $ X_{(f_n)}$. 
\pushcounter
\end{enumerate}
We have $f_n (\sum_j f_j x_j f_j)\sim_{4\e_{n-1}} f_n (\sum_{j=n-1}^{n+1} f_j x_j f_j)$. 
Since $\|[f_k,f_{k+1}]\|\leq \e_k$ 
we have
\[
\textstyle\|[x,f_n]\|\leq \sum_{j=n-1}^{n+1} \|[f_j x_j f_j, f_n]\|
\leq 3(2\e_{n-1}+\delta_{n-1})
\]
and the conclusion follows.

\begin{proof}[Proof of Theorem~\ref{T1}]
Fix a $\sigma$-unital algebra $A$ and let $\pi\colon M(A)\to C(A)$ be the quotient map.

Fix degree-1 *-polynomials $P_n(\bar x)$ with coefficients in $C(A)$
and compact subsets $K_n\subseteq \bbR$ such that for every $n$ the system
\begin{enumerate}
\popcounter
\item \label{I.approx}
$\|P_j(\bar x)\|\in (K_j)_{1/n}\qquad \text{ for all } j\leq n$
\pushcounter
\end{enumerate}
has a solution in $C(A)$. 
Without a loss of generality all the inequalities of the form  
$\|x_n\|\leq 1$, for $n\in \bbN$,  are in the system. 
By compactness, we can assume each $K_n$ is a singleton $\{r_n\}$. 
Therefore we may assume 
\eqref{I.approx} consists of conditions of the form $|\| P_n(\bar x)\|-r_n|\leq 1/m$, 
for all $m$ and $n$. 
By re-enumerating $P_n$'s and adding redundancies, we may also assume that only the variables $x_j$, for $j\leq n$, occur in $P_n$ for every $n$. 
For each $m$ fix an approximate solution $\dot x_j(m)=\pi(x_j(m))$, for $j\leq m$, as in \eqref{I.approx}. 
Therefore 
\begin{enumerate}
\popcounter
\item \label{I.approx.1.1} $|\|P_k(\pi( \bar x(m)))\|-r_k |\leq 1/m$ for all $k\leq m$. 
\pushcounter
\end{enumerate}
We  choose all $x_k(m)$ to have norm $\leq 1$.

Let $P_n^0(\bar x)$ be a polynomial with coefficients in $M(A)$ that lift to the 
corresponding coefficients of $P_n(\bar x)$. 
Let $F_n$ be a finite subset of $M(A)$ such that $\pi(F_n)$ includes the following: 
\begin{enumerate}
\item [(i)] all coefficients
of every $P_j^0$ for  $j\leq n$, 
\item [(ii)]  $\{x_k(m):k\leq m\}$ satisfying \eqref{I.approx.1} for all $m\leq n$, 
and 
\item [(iii)] $\{P_j^0(x_0(j), \dots, x_j(j)): j\leq n\}$. 
\end{enumerate}
  With $\e_n=2^{-n}$ 
let $(e_n)$ and $(f_n)$ be as guaranteed by Lemma~\ref{L1}. 
Since $\|x_j(i)\|\leq 1$, by \eqref{I.bj} we have that 
\[
\textstyle y_i=\sum_j f_j x_j(i) f_j
\]
belongs to $M(A)$ for all $i$, and  \eqref{I.inX} implies $y_i\in X_{(f_n)}$
for all $i$. 

We shall prove $\|P_n(\pi (\bar y))\|=r_n$ for all $n$. 

By \eqref{I.sum1} we have $\|y_i\|\leq 2$.  
Fix $n$ and 
 a monomial $a x_k b$ of $P_n^0(\bar x)$. Then for all $j\geq n$ we have 
\[
\|a  f_j x_k(j) f_j b- f_j a x_k(j) b f_j\|\leq 2\e_j 
\]
and therefore the sum of these differences  is a convergent series in $A$ and we have
\begin{enumerate}
\popcounter
\item \label{I.approx.1} $ a (\sum_j f_j x_k(j) f_j) b \sim_A  \sum_j (f_j a x_k(j) b f_j)$. 
\pushcounter
\end{enumerate}
Since the polynomial 
$P_n^0(\bar x)$ has degree 1, all of its nonconstant monomials are either of the  form $a x_k b$  
or of the form $a x_k^* b$ for some $k$, $a$ and $b$, and by \eqref{I.approx.1}  
(writing $\sum_j f_j \bar y(j) f_j$ for the $n+1$-tuple
$(\sum_j f_j y_0(j) f_j, \dots, \sum_j f_j y_n (j) f_j)$)
\[
\textstyle P_n^0(\sum_j f_j y_k(j) f_j) \sim_A  \sum_j  f_j P_n^0( \bar y(j)) f_j. 
\]
By \eqref{I.binA} we have $\sum_j f_j y_i(j) f_j \sim_A \sum_j f_j y_i f_j$ for all $i$ 
and therefore  
\begin{equation*}
\textstyle P_n^0(\bar y)\sim_A P_n^0(\sum_j f_j \bar y(j) f_j)
\sim_A \sum_j f_j P_n^0(\bar y (j)) f_j. 
\end{equation*}
Using this, by  \eqref{I.sum2} 
we have that
\[
\|P_n(\bar y)\|=\|\pi(P_n^0(\bar y))\|=\limsup_j \| f_j P_n^0(\bar y) f_j\|
=r_n.
\]
Therefore $\pi(\bar y)$ is a solution to the system. 
Since the inequality $\| x_k\|\leq 1$ was in the system for all $k$ we also have
$\|y_k\|\leq 1$ for all $k$  and this concludes the proof. 
\end{proof}

\section{Limiting examples} 
\label{S.Limiting}
In this section we prove that the Calkin algebra is not countably saturated (cf. \cite{FaHaSh:Model2}). 
More precisely, in Proposition~\ref{P1} we construct a consistent type consisting of universal 
formulas that is not realized in the Calkin algebra. 
In Proposition~\ref{P2} we go a step further and present a proof, due to N. Christopher Phillips, 
 that some consistent quantifier-free type is not realized in the Calkin algebra.

For a unitary $u$ in a C*-algebra $A$ let 
\[
\xi(u)=\{j\in \bbN\mid u\text{ has a
$j$-th root}\}.
\]
 By Atkinson's theorem, every invertible operator in the Calkin algebra 
is the image of a Fredholm operator in $\cB(H)$ and therefore 
$\xi(u)$ is either $\bbN$ or $\{j\mid
j$ divides $m\}$ for some $m\in \bbN$, depending on whether the Fredholm index
of $u$ is 0 or $\pm m$.

Recall that a \emph{supernatural number} is a formal expression of the form 
$\prod_i p_i^{k_i}$, where $\{p_i\}$ is the enumeration of primes and each $k_i$ is a natural 
number (possibly zero) or $\infty$. The divisibility relation on supernatural numbers is defined 
in the natural way. 

\begin{prop}\label{P1}  
For any supernatural number $n$ the 
type $\bt(n)$ consisting of following conditions is finitely approximately realizable, 
but not realizable,  in the 
Calkin algebra. 
\begin{enumerate}
\item $x_0x_0^*=1$, $x_0^*x_0=1$, 
\item $x_k^k=x_0$, whenever $k$ is a natural number that divides $n$, 
\item $\inf_{\|y\|=1} \|y^k-x_0\|\geq 1$, whenever $k$ is a natural number that does not divide $n$.  
\end{enumerate}
In particular, the Calkin algebra is not countably saturated. 
\end{prop}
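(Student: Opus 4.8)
The plan is to prove the two halves—consistency and non-realizability—separately, in both cases reducing the analytic statements to the computation of the Fredholm index recorded in $\xi(\cdot)$.

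For consistency, fix a finite fragment $\bt_0\subseteq\bt(n)$ and let $N$ bound all exponents $k$ occurring in $\bt_0$. First I would produce a finite natural number $m$ that agrees with $n$ on all relevant divisibilities, namely $k\mid m\iff k\mid n$ for every $k\leq N$; this is achieved by setting $v_p(m)=\min(v_p(n),N)$ for each prime $p\leq N$ and $v_p(m)=0$ otherwise, and checking the two implications against the prime valuations of $n$. Let $x_0=\pi(T)$ for a Fredholm operator $T$ of index $m$ (e.g.\ a power of the unilateral shift), so that by the paragraph preceding the Proposition $\xi(x_0)=\{k:k\mid m\}$. Conditions (1) hold since $x_0$ is a corona unitary; for each $k\mid n$ in $\bt_0$ we have $k\mid m$, so $x_0$ admits an exact $k$-th root $x_k$, giving (2). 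For condition (3) with $k\nmid n$ (hence $k\nmid m$) I would argue: if some $y$ with $\|y\|=1$ satisfied $\|y^k-x_0\|<1$, then $\|x_0^{-1}y^k-1\|<1$, so $x_0^{-1}y^k$—hence $y^k$, hence $y$—is invertible in the Calkin algebra; multiplicativity of the index then gives $k\cdot\mathrm{ind}(y)=\mathrm{ind}(x_0)=m$, forcing $k\mid m$, a contradiction. Thus $x_0$ realizes $\bt_0$ exactly, and $\bt(n)$ is finitely approximately realizable.

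For non-realizability, suppose $x_0$ together with roots $x_k$ realizes all of $\bt(n)$. By (1), $x_0$ is a unitary in the Calkin algebra, so by Atkinson's theorem it is the image of a Fredholm operator of some index $\ell$, and the preceding discussion gives $\xi(x_0)=\bbN$ when $\ell=0$ and $\xi(x_0)=\{k:k\mid|\ell|\}$ otherwise. Conditions (2) force $\{k:k\mid n\}\subseteq\xi(x_0)$, while the invertibility-and-index argument above, in contrapositive, shows that condition (3) forces $k\notin\xi(x_0)$ whenever $k\nmid n$, i.e.\ $\xi(x_0)\subseteq\{k:k\mid n\}$. Hence $\xi(x_0)=\{k:k\mid n\}$. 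But whenever $n$ is a genuine supernatural number that is not divisible by every natural number—so that $\{k:k\mid n\}$ is infinite yet properly contained in $\bbN$, as for $n=2^\infty$—this set is neither all of $\bbN$ nor the divisor set of any finite integer, contradicting the dichotomy for $\xi(x_0)$. Since $\bt(n)$ consists of universal (in particular quantifier-free) conditions and is consistent yet unrealized, the Calkin algebra is not countably saturated.

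The main obstacle is the passage between the analytic content of condition (3)—the absence of norm-one approximate $k$-th roots within distance one—and the purely topological index obstruction. The crux is the observation that $\|y^k-x_0\|<1$ for a corona unitary $x_0$ already forces $y$ to be invertible in the Calkin algebra with $k\cdot\mathrm{ind}(y)=\mathrm{ind}(x_0)$; everything else is bookkeeping with prime valuations and the index dichotomy for $\xi$.
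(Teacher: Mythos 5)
Your proof is correct and follows essentially the same route as the paper: approximate realizations by powers of the unilateral shift $\dot s^{m}$ with $m$ chosen to match the divisibility pattern of $n$ up to the bound of the given finite fragment, together with the Fredholm-index dichotomy for $\xi$ and the fact that invertibles in the Calkin algebra with distinct indices lie at distance $\geq 1$ (which you derive elementarily from the Neumann series and local constancy of the index, where the paper cites Pedersen). You also make explicit what the paper leaves implicit — that non-realizability requires $\{k:k\mid n\}$ to be infinite but properly contained in $\bbN$, so that the statement as literally given needs the caveat that $n$ is an infinite supernatural number other than $\prod_i p_i^\infty$ — which is a point in your write-up's favor rather than a gap.
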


\begin{proof}
 We have  $n=\prod_j p_j^{k_j}$, where $(p_j)$ is the increasing
enumeration of primes and $k_j\in \bbN\cup \{\infty\}$. 

Let $s$ denote the unilateral shift on the underlying Hilbert space $H$ and let $\dot s$ be its
image in the Calkin algebra. 
For $l\in \bbN$ 
let $n_l=\prod_{j=1}^l p_j^{\min(k_j,l)}$. We claim that   
 \[
 \xi(\dot s^{n_l})=\{m\in \bbN\mid m\text{ divides }n\}.
\]
The inclusion is trivial. 
 In order to prove the converse inclusion  
fix $k\in \bbN$ that does not divide $n_l$.  
Assume for a moment that  $\dot s^{n_l}$ has a $k$-th root $\dot v$ in $\cC(H)^{\cU}$.  
Let  $u$ and $w$ be elements of $\cB(H)$ mapped to $\dot s^{n_l}$ and $\dot v^k$
by the quotient map. Then they are Fredholm operators with different Fredholm indices and
$\|\pi(u)\|=\|\pi(w)\|=1$.   Essentially by \cite[3.3.18 and 3.3.20]{Pede:Analysis} we have
 $\|\pi(u-w)\|\geq 1$, and therefore $v=\pi(w)$ is not $k$-th root of $\dot s^{n_l}$.  
\end{proof}

Proposition~\ref{P2} below was communicated to us   by N. Christopher Phillips in \cite{Phi:Email260611}. 
We would like to thank Chris for his kind permission to include this result here. 
While the proof in \cite{Phi:Email260611} relied entirely on known results 
about Pext and a topology 
on Ext  (more precisely, 
\cite[\S 3]{Sali:Relative}, \cite[\S 2]{Arv:Notes}, \cite[[Theorem 3.3]{Scho:FineII}, and 
\cite[Proposition 9.3 (1)]{Scho:Pext}), for convenience of the reader we shall present a self-contained proof of this result.

\begin{prop}\label{P2} There is a countable 
degree-1 type over the Calkin algebra that is finitely approximately
realizable by unitaries but not realizable by a unitary. 
In particular, the Calkin algebra is not countably quantifier-free saturated. 
\end{prop}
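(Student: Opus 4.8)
The plan is to manufacture a countable degree-$1$ type $\bt$ over $\cC(H)$ that secretly encodes a \emph{phantom} element of $\Ext$, in the spirit of the solenoidal $\varprojlim^{1}$ obstructions underlying \cite{Scho:Pext}, and then to invoke Lemma~\ref{L.un}. The essential difficulty, compared with Proposition~\ref{P1}, is that a degree-$1$ condition is \emph{affine} in the variables: one is not allowed to write an algebraic relation such as $x_{n+1}^{2}=x_{n}$ (which is how roots were handled in Proposition~\ref{P1}), nor even the unitarity relation $xx^{*}=1$. So I would not impose the tower relations on the variables at all; instead I would \emph{pre-build} a compatible tower of unitaries on a fixed orthogonal block decomposition $H=\bigoplus_{n}H_{n}$, using shifts $s_{n}$ on the corners as in Proposition~\ref{P1}, and let $\bt$ impose only affine conditions on a single variable $x$: the commutation conditions $\|x\dot p_{n}-\dot p_{n}x\|=0$ (degree $1$) together with matching conditions $\|\dot p_{n}x-\dot w_{n}\dot p_{n}\|\le\rho_{n}$ pinning $x$ on the $n$-th block to a prescribed unitary $\dot w_{n}$ of index $-2^{n}$. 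It is precisely the approximate satisfiability by unitaries, in the presence of Lemma~\ref{L.un}, that will turn the absence of a unitary realization into a failure of quantifier-free saturation.

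For consistency I would verify that every finite fragment is $\e$-realized by an honest unitary. A finite fragment constrains only finitely many blocks, say the first $N$; on those I prescribe $\dot w_{0},\dots,\dot w_{N}$, and on the infinite-dimensional tail I am free to choose a unitary of index $0$. An index-$0$ unitary is connected to $1$ and hence possesses roots of all orders, so the assembled block-diagonal operator is a genuine unitary of $\cC(H)$ that $\e$-satisfies the fragment. This is the point at which the obstruction is pushed ``to infinity'': at any finite stage the accumulated index is absorbed by the free tail.

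For non-realizability I would argue by contradiction. A unitary $u$ realizing all of $\bt$ would, through the affine matching conditions, have its Fredholm index pinned along the blocks, and the tower compatibility built into the coefficients forces the relevant index to be divisible by $2^{n}$ for every $n$, hence to vanish; yet the same coefficients are chosen so that the surviving tail index is nonzero, the discrepancy being exactly the $\varprojlim^{1}$ phantom that no finite fragment detects. Quantitatively, the contradiction is extracted as in Proposition~\ref{P1}: two Fredholm operators of distinct index lie at Calkin-distance $\ge 1$ (\cite[3.3.18, 3.3.20]{Pede:Analysis}), so a single unitary cannot be $\rho_{n}$-close to every $\dot w_{n}$ while carrying the wrong index. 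Since $\bt$ is a degree-$1$ (hence quantifier-free) type that is finitely approximately realizable by unitaries but realized by no unitary, Lemma~\ref{L.un} gives that $\cC(H)$ is not countably quantifier-free saturated. Note that by Theorem~\ref{T1} the type \emph{is} realized in $\cC(H)$, necessarily by a non-unitary element; it is only the unitarity that fails, which is what makes this a genuine limiting example.

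The step I expect to be the main obstacle is the simultaneous calibration of the coefficients $\dot w_{n}$ and tolerances $\rho_{n}$ so that the two demands coexist: the type must be \emph{loose enough} that every finite fragment has an honest unitary solution (no finite index obstruction survives, since the free tail can be set to index $0$), yet \emph{tight enough} that any global unitary solution is forced into an impossible infinitely-divisible, nonzero index. Turning the Pext and $\varprojlim^{1}$ black box of \cite{Scho:Pext,Scho:FineII} into an explicit, self-contained index computation, while keeping every single condition strictly affine so that the type is genuinely degree $1$ and not merely quantifier-free, is the technical heart of the argument. It is exactly the separation of the obstruction from every finite approximation, a separation that Lemma~\ref{L.un} cannot repair and that, by contrast, does not arise for positive or self-adjoint elements (cf.\ Lemma~\ref{L.sa}).
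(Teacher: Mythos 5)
Your framing is sound as far as it goes (only degree-1 conditions on the variable, Lemma~\ref{L.un} as the bridge to non-saturation, an index obstruction pushed ``to infinity''), but the construction itself fails: the type you describe \emph{is} realized by a unitary, so the non-realizability half collapses. The hidden false premise is that the Fredholm index of a unitary $u\in\cC(H)$ commuting with all the $\dot p_n$ is controlled by the block indices $\mathrm{ind}(P_nUP_n|_{H_n})$; it is not, because index leaks between adjacent blocks through finite-rank corrections, and every block is infinite-dimensional. Concretely, let $W_n$ be a lift of $\dot w_n$, an isometry of $H_n$ with cokernel $C_n$ of dimension $2^n$. Choose $E_n=C_n\oplus F_n$ with $F_n\subseteq W_n(H_n)$ and $\dim E_n=2^{n+1}-1$, and put $D_n=W_n^{-1}(F_n)$, so $\dim D_n=2^n-1=\dim E_{n-1}$. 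Define $U$ on $H_n$ to agree with $W_n$ on $H_n\ominus D_n$ (range $H_n\ominus E_n$) and to map $D_n$ isometrically onto $E_{n-1}\subseteq H_{n-1}$ (with $E_{-1}=\{0\}$). The ranges $(H_n\ominus E_n)\oplus E_{n-1}$ are pairwise orthogonal and telescope to all of $H$, so $U$ is an honest unitary of $\cB(H)$, each $[U,P_n]$ has finite rank, and $P_nUP_n-W_nP_n$ has finite rank; the block index is $(2^n-1)-(2^{n+1}-1)=-2^n$ as prescribed. Hence $u=\pi(U)$ is a unitary of the Calkin algebra with $[u,\dot p_n]=0$ and $\dot p_n u=\dot w_n\dot p_n$ \emph{exactly}, no matter how small the $\rho_n$. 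The same bookkeeping accommodates any sequence of block indices, so block-local matching conditions plus commutation with an orthogonal family of projections can never force a global index contradiction. (Your consistency argument is fine; it just has nothing to collide with.)

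The paper avoids this by replacing orthogonal blocks with a \emph{nested} tower. It takes the CAR algebra $A=\varinjlim M_{2^n}(\bbC)$ and two injective unital $*$-homomorphisms $\pi_1,\pi_2\colon A\to\cC(H)$ inequivalent in $\Ext(A)$ --- $\pi_1$ liftable to $\cB(H)$, $\pi_2$ built recursively so that its restriction to $M_{2^n}(\bbC)$ has strong-$\Ext$ class $2^{n-1}$ modulo $2^n$ --- and uses the degree-1 type consisting of $xa=F(a)x$ for $a$ in a countable dense subset of $\pi_1[A]$, where $F=\pi_2\circ\pi_1^{-1}$. Because the subalgebras $M_{2^n}(\bbC)$ are nested rather than orthogonal, a single unitary realizing the whole type would have to carry a Fredholm index congruent to $2^{n-1}$ modulo $2^n$ for every $n$ simultaneously, which no integer does; that is the genuine $\varprojlim^1$/Pext obstruction you were reaching for, and nesting is precisely what prevents it from leaking away block by block. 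Finite fragments are handled by the triviality of $\Ext(M_{2^n}(\bbC))$, much as you handle yours with an index-absorbing tail. To salvage a construction in your style you would have to couple the constraints across levels through a single intertwining relation over a dense subset of such an AF algebra, not through an orthogonal decomposition.
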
 

\begin{proof} We include more details than a C*-algebraist may want to see. 
Recall that for a C*-algebra $A$ the abelian semigroup $\Ext(A)$ is defined as follows:
On the set of *-homomorphisms $\pi\colon A\to \cC(H)$ consider the conjugacy relation
by unitaries in $\cC(H)$. On the set of conjugacy classes define addition by 
letting $\pi_1\oplus \pi_2$ be the direct sum, where $\cC(H)$ is identified with $\cC(H\oplus H)$. 
The only fact about $\Ext$ that we shall need is that there exists a simple separable C*-algebra $A$ 
such that $A$ is a direct limit of algebras whose $\Ext$ is trivial, but $\Ext(A)$ is not trivial. 
For example, the CAR algebra has this property and we shall sketch a proof of this well-known 
fact below. 

Now fix $A$ as above and let $\pi_1\colon A\to \cC(H)$ and $\pi_2\colon A\to \cC(H)$ be 
inequivalent *-homomorphisms. Since $A$ is simple both  $\pi_1$ and $\pi_2$ are injective 
and  $F(\pi_1(a))=\pi_2(a)$  defines a map $F$ from $\pi_1[A]$ to $\pi_2[A]$. 
This map is not implemented by a unitary, but if $A=\lim_n A_n$ so that $\Ext(A_n)$ is trivial for every $n$, 
then the restriction of $F$ to  $\pi_1[A_n]$ is implemented by a unitary. 
Fix a countable dense subset $D$ of $\pi_1[A]$. Then the countable degree-1 type $\bt$ consisting of  
all  conditions of the form $xa=F(a)x$, for $x\in D$,  is finitely approximately 
realizable by a unitary, but not realizable by a unitary. 

We now sketch a proof that $\Ext$ of the CAR algebra $A=\bigotimes_n M_2(\bbC)$ is nontrivial. 
Write $A$ as a direct limit of $M_{2^n}(\bbC)$ for $n\in \bbN$. 
While $\Ext(M_{2^n}(\bbC))$ is trivial, the so-called \emph{strong $\Ext$} of $M_{2^n}(\bbC)$ 
is not. Two *-homomorphisms of $M_{2^n}(\bbC)$ into $\cC(H)$ are \emph{strongly equivalent} if 
they are conjugate by $\dot u$, for a unitary $u\in \cB(H)$. 
Every unital *-homomorphism $\Phi$ of $M_{2^n}(\bbC)$ into $\cC(H)$ 
is lifted by a *-homomorphism $\Phi_0$ into $\cB(H)$ and the 
strong equivalence class of $\Phi$ is uniquely determined by the 
 codimension of $\Phi_0(1)$ modulo $2^n$. Any unitary $u$ in $\cC(H)$ that witnesses 
 such $\Phi$ is conjugate to the trivial representation of $M_{2^n}(\bbC)$ which necessarily 
 has Fredholm index equal to the codimension of $\Phi_0(1)$ modulo $2^n$. 
 Now write $M_{2^\infty}$ as $\bigotimes_\bbN A_n$ where $A_n\cong M_2(\bbC)$ for all $n$. 
 Recursively find *-homomorphisms $\pi_1^n$ and $\pi_2^n$ from  $\bigotimes_{j\leq n} A_j$
 into the Calkin algebra 
 so that (i) $\pi_j^{n+1}$ extends $\pi_j^n$ for all $n$ and $j=1,2$, (ii) each $\pi_1^n$
 has trivial strong $\Ext$ class, and (iii) each $\pi_2^n$ has strong $\Ext$ class 
$2^{n-1}$  (modulo $2^n$). The construction is straightforward. 
The limits $\pi_1$ and $\pi_2$ are *-homomorphisms of the CAR algebra into the Calkin algebra
such that the first one lifts to a homomorphism of the CAR algebra into $\cB(H)$ and the 
other one does not. 
\end{proof}

\section{Concluding remarks} 
\label{S.Concluding} 

Both obstructions to the countable saturation of the Calkin algebra described in \S\ref{S.Limiting}
have a K-theoretic nature. 

\begin{question} \label{Q.K} Do all obstructions to countable  saturation, or at least 
to countable quantifier-free saturation,  of corona algebras have a
K-theoretic nature? 
\end{question} 

A test question for Question~\ref{Q.K} was suggested by It\"ai Ben Ya'acov. 
Consider the unitary group $\cU(\cC(H))$ of the Calkin algebra with respect to the 
gauge given by the Fredholm index (see \cite{BY:Continuous}). Is this structure quantifier-free 
countably saturated? 
Here is an even less ambitious test question:

\begin{question} Let $\cU_0$ be the subgroup of the unitary group of the Calkin algebra consisting
of unitaries of Fredholm index zero. Is this structure quantifier-free countably saturated in 
the logic of metric structures? 
\end{question}

A discrete total ordering $L$ is countably saturated (in the classical model-theoretic 
sense, see e.g., \cite{ChaKe}) if and only if whenever $X$ and $Y$ are countable 
subsets  of $L$ such that $x<y$ for all $x\in X$ and all $y\in Y$ and either 
$X$ has no maximal element or $Y$ has no minimal element  there
is $z\in L$ such that $x<z$ and $z<y$ for all $x\in X$ and all $y\in Y$. 
The fact that this definition does not involve formulas of arbitrary complexity is a consequence of 
the classical result that theory of dense linear orderings allows elimination of quantifiers. 

Hadwin proved (\cite{Had:Maximal}) that every maximal chain of projections in the Calkin 
algebra, when considered
as a discrete linear ordering, is countably saturated. 
It should be noted that not every maximal commuting family of projections in the Calkin algebra 
is countably saturated. For example, the family of projections of an atomless masa 
is isomorphic to the Lebesgue measure algebra. The latter is a  complete Boolean algebra 
and therefore not countably saturated. 
It is not difficult to see that in every quantifier-free countably saturated algebra
every maximal chain of projections is countably saturated. 
However,  the Calkin algebra is not countably quantifier-free saturatedÊ(Proposition~\ref{P2}) 
 and we don't know whether countable degree-1 saturation suffices for Hadwin's result. 
 An affirmative answer to Question~\ref{Q.proj} below would suffice for this.

We also don't know whether in a countably degree-1 saturated C*-algebra 
every countable type that is approximately  finitely satisfiable 
by projections is realized by a projection (cf. Lemma~\ref{L.sa} and Lemma~\ref{L.un}). 
An affirmative answer would imply an affirmative answer to the following:
(see Definition~\ref{D.gap}).  

\begin{question} \label{Q.proj} 
Assume $C$ is a countably degree-1 saturated C*-algebra. 
If $A$ and $B$ are separable orthogonal subalgebras  of $C$, 
are they necessarily separated by a projection? 
\end{question}

A positive answer to the following would provide a more satisfying proof of Corollary~\ref{C.Mn}. 

\begin{question} \label{Q.Mn}
Assume $C$ is countably degree-1 saturated. Is $M_n(C)$ countably degree-1 saturated
for all $n$? 
\end{question}

The statement of \cite[Proposition~9.1]{Pede:Corona} distinguishes between 
$C$, $M_2(C)$ and $M_4(C)$ being SAW*-algebras and therefore 
suggests that the analogous assertion for SAW*-algebras
is false, or at least not obviously true. A closely related problem to Question~\ref{Q.Mn}
is the following (the relevant definition of `definable' is as in \cite{BYBHU} or \cite{FaHaSh:Model2}): 

\begin{question} \label{Q.def} If $A$ is a C*-algebra and $n\geq 2$, is the unit ball of $M_n(A)$, 
when identified with a subset of $(A_{\leq 1})^{n^2}$, definable over $A$? 
If so, what is  the logical complexity of the definition? 
\end{question}

A positive answer to Question~\ref{Q.def} would imply that if $C$ is countably saturated
then so is $M_n(C)$. 
By \cite[Proposition~9.1]{Pede:Corona}, this would imply, for example, that 
countably degree-1 saturated C*-algebras allow weak polar decomposition.

Following \cite{CoFa:Automorphisms} we say that an automorphism $\Phi$ of $M(A)/A$
is \emph{trivial} if the set $\{(a,b)\in M(A)^2: \Phi(a/A)=b/A\}$ is strictly Borel. 
Every inner automorphism is clearly of this form.  Also in the case when $A$ is separable $M(A)$ 
is separable metric in the strict topology and therefore 
$M(A)/A$ has at most $2^{\aleph_0}$ trivial automorphisms. 

\begin{problem} \label{P.CH} Prove that the Continuum Hypothesis implies that every corona of an infinite-dimensional, non-unital, 
separable C*-algebra has nontrivial  automorphisms. 
\end{problem}

A positive answer to this problem for a large class of C*-algebras, including 
all stable C*-algebras of real rank zero, is given in \cite{CoFa:Automorphisms}. Methods
of the present paper of \cite{CoFa:Automorphisms} do not apply to the algebra  $C([0,1))$. 
However, Problem~\ref{P.CH} is known to have an affirmative solution  by a result of 
J. C. Yu (see \cite[\S 9]{Hart:Cech}). We don't know the degree of saturation of the  
corona of $C([0,1))$. 
An even more interesting problem  is to prove that an appropriate forcing axiom implies 
all automorphisms of all coronas of separable C*-algebras are trivial (this is necessarily 
 weaker than  `inner,' see the last section of \cite{Fa:All}). 

A C*-algebra $C$ is \emph{countably homogeneous} (as a metric structure) if 
for every two sequences $\langle a_n: n\in \bbN\rangle$ and $\langle b_n: n\in \bbN\rangle$ 
in $C$ that have the same type in $C$ (see \cite{FaHaSh:Model2}) there exists an 
automorphism $\Phi$ of $C$ such that $\Phi(a_n)=b_n$ for all $n$. 
Fully countably saturated C*-algebras of character density $\aleph_1$ are 
 are countably homogeneous. Therefore for example the Continuum Hypothes isimplies that 
 ultrapowers of separable C*-algebras associated with nonprincipal ultrafilters on $\bbN$ are 
 countably homogeneous. 

\begin{question} Are corona algebras of $\sigma$-unital C*-algebras countably homogeneous? 
\end{question}

A positive answer in the case of the Calkin algebra $\cC(H)$ would imply that 
the unitaleral shift and its adjoint have the same type if and only if 
it is relatively consistent with ZFC that $\cC(H)$ has a K-theory reversing automorphism. 

\providecommand{\bysame}{\leavevmode\hbox to3em{\hrulefill}\thinspace}
\providecommand{\MR}{\relax\ifhmode\unskip\space\fi MR }
\providecommand{\MRhref}[2]{%
  \href{http://www.ams.org/mathscinet-getitem?mr=#1}{#2}
}
\providecommand{\href}[2]{#2}

\end{document}